\newcommand{\di}{i}
\xapptocmd\normalsize{%
	\abovedisplayskip=12pt plus 3pt minus 9pt
	\abovedisplayshortskip=0pt plus 3pt
	\belowdisplayskip=12pt plus 3pt minus 9pt
	\belowdisplayshortskip=7pt plus 3pt minus 4pt
}{}{}
\theoremstyle{definition}
\newtheorem{definition}{Definition}[section]
\newtheorem{remark}[definition]{Remark}
\theoremstyle{plain}
\newtheorem{theorem}[definition]{Theorem}
\newtheorem{corollary}[definition]{Corollary}
\newtheorem{lemma}[definition]{Lemma}
\numberwithin{equation}{section}
\title[Differential Subordination]{\textbf{Differential Subordinations for functions with positive real part using Admissibility conditions}}
\author[M. Sharma]{Meghna Sharma}
\address{Department of Mathematics, University of Delhi, Delhi--110 007, India}
\email{meghnasharma203@gmail.com}
\author [S. Kumar]{Sushil Kumar}
\address {Bharati Vidyapeeth's college of Engineering, Delhi-110063, India}
\email{sushilkumar16n@gmail.com}
\author [N.K. Jain]{Naveen Kumar Jain}
\address {Department of Mathematics, Aryabhatta College, Delhi-110021,India}
\email{naveenjain05@gmail.com}
\date{}
\keywords{Differential subordination; admissibilty condition; modified sigmoid function; Janowski function; Exponential function}
\subjclass[2010]{30C45, 30C50}
\thanks{The first author is supported by Junior Research Fellowship from Council of Scientific and Industrial Research, New Delhi, Ref. No.:1753/(CSIR-UGC NET JUNE, 2018).}
\begin{document}

\maketitle

\begin{abstract}
Some sufficient conditions on certain constants which are  involved in some first, second and third order differential subordinations associated with certain functions with positive real part like modified Sigmoid function, exponential function and Janowski function are obtained so that the analytic function $p$ normalized by the condition $p(0)=1$,  is subordinate to Janowski function. The admissibility conditions for Janowski function
are used as a tool in the proof
of the results. As application, several
sufficient conditions are also computed for Janowski starlikeness.

\end{abstract}

\section{Introduction}
Let $\mathcal{A}$ denote the class of all analytic functions $f$ on the open unit disc $\mathbb{D}:= \{z \in \mathbb{C}: |z| < 1\}$ normalized by the conditions $f(0)=0$ and $f'(0)=1$. An analytic function defined on $\mathbb{D}$ is univalent if $f$ is one-to-one in $\mathcal{A}$.
Let $\mathcal{S} \subset \mathcal{A}$ be the class of all univalent functions. Denote the class of all analytic functions $f$ having Taylor series expansion $f(z)=a+a_{n}z^{n}+a_{n+1}z^{n+1}+...$, for some $a \in \mathbb{C}$ and fixed integer $n$ by $\mathcal{H}[a,n]$.
Let $f$ and $g$ be analytic in $\mathbb{D}$.
The function $f$ is subordinate to $g$, and write $f \prec g$,
if there exists an analytic function $w : \mathbb{D} \rightarrow \mathbb{D}$
with $|w(z)| \leq |z|$ such that $f(z) = g(w(z))$ for all $z \in \mathbb{D}$.
In particular, if $g \in \mathscr{U}$ then, $f \prec g$ if and only if
$f(0) = g(0)$ and $f(\mathbb{D}) \subseteq g(\mathbb{D})$.

Let $\mathcal{P}$ be the class of functions with positive real part of the form
$p(z)=1+c_1 z+c_2 z+\cdots$ over  $\mathbb{D}$.
Let  $A$ and  $B$ be  arbitrary fixed numbers which are satisfying the inequality $-1 \leq B < A \leq 1$, then the analytic function $p\in \mathcal{P}$ is known as the Janowski functions associated with right half plane  if it satisfies the subordination relation $p(z) \prec (1+Az)/(1+Bz)$ for all $z \in \mathbb{D}$.
The class of such functions is denoted by $\mathcal{P}[A,B]$. Let $\mathcal{S}^*[A,\,B]$ be the class of Janowski starlike functions $f\in \mathcal{A}$ such that
${z f'(z)}/{f(z)}\in \mathcal{P}[A,\, B]$ for $z \in \mathbb{D},$
introduced by Janowski \cite{MR0267103}.
Let $\mathcal{S}^*[A,B]$ be the class of the functions $f \in \mathcal{A}$ such that the quantity $zf'(z)/f(z)$ lies in the region $\Delta = \{w \in \mathbb{C}: |(w-1)/(A-Bw)| < 1\}$.
As a special case, we note that  $S^*[1-2 \alpha, -1] = \mathcal{S}^*(\alpha)$ that contains starlike functions of order $\alpha$ \cite{goodman 1,Robertson}.   
In 2015, authors \cite{MR3394060} introduced the class $\mathcal{S}^*_{e}$ which contains the functions $f\in \mathcal{A}$ satisfying the subordination relation ${zf'(z)}/{f(z)} \prec e^z$ for all $z\in \mathbb{D}$. In addition, if $f\in \mathcal{S}^*_{e}$, then the quantity $zf'(z)/f(z)$ lies in the domain $\{w\in\mathbb{C}\colon |\log w|<1\}$.
Recently, Goel and Kumar \cite{MR4044913} introduced and studied the class $\mathcal{S}^{*}_{SG}$ which contains starlike functions associated with modified sigmoid function $\phi_{SG}(z):=2/(1+e^{-z})$ and satisfy the subordination relation $zf'(z)/f(z) \prec \phi_{SG}$ for all $z \in \mathbb{D}$. In similar way, if the function $f\in \mathcal{S}^{*}_{SG}$, then the quantity $zf'(z)/f(z)$ lies in the domain $\{w\in\mathbb{C}\colon |\log w/(2-w)|<1\}$. For details,  see\cite{3,Naveen10,Naveen12}.

Goluzin \cite{Goluzin} studied initially the  first order differential subordination $zp'(z) \prec zq'(z)$, whenever $zq'(z)$ is convex,  the subordination $p\prec q$ holds   and the  function $q$ is the best dominant. After this  basic result, many authors established several generalizations of  differential subordination implications. In 1981, an article titled "Differential subordination and univalent functions" by Miller and Mocanu \cite{MR1760285} commenced the study of differential subordination as a generalized version of differential inequalities. For more details, see \cite{Ant94,Liu,Ozk,Miller03}.
In 1989, Nunokawa \emph{et al.\@} \cite{MR0975653} studied the first order differential subordination and proved that $1+zp'(z) \prec 1+z$ implies $p(z) \prec 1+z$.
They used this result to provide a criterion so that a normalized analytic function is univalent in $\mathbb{D}$.
Then, Ali \emph{et al.\@} \cite{MR2336133} generalized this result and proved that $p(z)$ is subordinate to the Janowski function whenever $1+\beta z p'(z)/p^{j}(z) \prec (1+Dz)/(1+Ez)$ for $j=0,1,2$.
Here, $A,B,D,E \in [-1,1]$.
Further, Ali \emph{et al.\@} \cite{MR2917253} determined the estimate on $\beta$ so that the subordination $1+\beta z p'(z)/p^{j}(z)$ is subordinate to the function $ \sqrt{1+z}$, $(j=0,1,2)$ which implies that $p(z)$ is subordinate to $\sqrt{1+z}$.
Later, Kumar \emph{et al.\@} \cite{MR3063215} computed a bound on $\beta$ so that $p(z) \prec \sqrt{1+z}$, whenever $1+\beta z p'(z)/p^{j}(z) \prec (1+Dz)/(1+Ez)$, $(j=0,1,2)$ with $|D| \leq 1$ and $-1 <E <1$.
Some of these results were not sharp.
Also, it was difficult to establish analogous results for certain functions with positive real parts such as $\phi_{0}(z):=1+(z/k)((k+z)/(k-z))$, $(k=1+\sqrt{2})$, $\phi_{\sin}(z):=1+\sin z$, $\mathcal{Q}(z):=e^{e^{z}-1}$ by the approach used in above discussed research work.
Later in 2018, Kumar and Ravichandran \cite{MR3800966} used some different approach and were able to established best possible bounds on $\beta$ so that  $1+\beta z p'(z)/p^{j}(z) $ is subordinate to $\sqrt{1+z}, (1+Az)/(1+Bz)$ which implies that $p(z) \prec e^{z}, (1+Az)/(1+Bz)$.
In 2018, Ahuja \emph{et al.\@} \cite{Ahuja18} also obtained sharp subordination implications results for the functions associated with lemniscate of Bernoulli. For  recent work related to first order differential subordinations, reader may refer \cite{Bohra19,Cho18,Ebadian20,Gandhi18,Kanika20,Wani20}.
%This method did not prove to be much useful while dealing with certain Briot-Bouquet
%differential subordination.

%Recently, many authors studied the higher order differential subordination relations using admissibility conditions.
In 2018, Madaan \emph{et al.\@} \cite{MR4036351} established first and second order differential subordinations associated with the lemniscate of Bernoulli using admissibility technique.
Further, Anand \emph{et al.\@} \cite{Anand19} also studied the generalized  first order differential subordination for the Janowski functions.
In 2019, Dorina R\u{a}ducanu \cite{MR3942345} established second order differential subordination implications associated with generalized Mittag-Leffler function.
%Higher order differential subordination relations for the class of Sigmoid starlike functions has been discussed by the authors in
For related work, readers may see \cite{MR4141599,MR3962536,Kanas06}.

Motivated by the aforesaid work, using admissibility conditions for Janowski functions, we determine certain conditions on $\beta$, $\gamma$, $A$ and $B$ where $-1 \leq B < A \leq 1$ so that $p$ belongs to the class $\mathcal{P}[A,B]$ whenever $1+\beta {z p^{\prime}(z)}/{p^{k}(z)}$, $1 + \beta {(zp'(z))^2}/{p^{k}(z)}$, $(1-\alpha) p(z) + \alpha p^2(z) + \beta z {p'(z)}/{p^{k}(z)}$ $(\alpha \in [0,\,1])$, $\left({1}/{p(z)}\right) - \beta z {p'(z)}/{p^{k}(z)}$, $p(z)+{zp'(z)}/{(\beta p(z)+ \gamma)^{k}}$ $(\gamma>0)$, $1+\gamma z p^{\prime}(z)+ \beta z^2 p''(z)$ and $p(z) +\gamma z p^{\prime}(z)+ \beta z^2 p''(z)$  are subordinate to some functions with positive real part like $e^z$, $2/(1+e^{-z})$ and $(1+Az)/(1+Bz)$, where $k$ is a positive integer.
Certain implications of these results are also discussed which gives sufficient conditions for an analytic function $f$ to be in the class $\mathcal{S}^{*}[A,B]$.

\section{The Admissibility Condition}
This section provides some basic facts related to admissibility conditions associated with Janowski function that will be needed in the proving our main results.
\begin{definition}
Let $\psi(r,s,t;z) : \mathbb{C}^3 \times \mathbb{D} \to \mathbb{D}$ be  analytic and  $h \in \mathcal{S}$.
Then a function $p \in \mathcal{A}$, satisfying following  differential subordination relation
\[\psi(p(z),zp'(z),z^2p''(z);z) \prec h(z)\]
is called its \emph{solution}.
\end{definition}

Let $\mathfrak{Q}$ denote the class of all  functions $q\in \mathcal{S}$ defined on $\overline{\mathbb{D}} \setminus \mathbf{E}(q)$, where
$\mathbf{E} = \{\zeta \in \partial \mathbb{D} : \lim\limits_{z \rightarrow \zeta} q(z) = \infty \}$	
such that $q'(\zeta) \neq 0$ for $\zeta \in \partial \mathbb{D} \setminus \mathbf{E}(q)$.

\begin{definition}
Let $\Omega \subset \mathbb{C}, q \in \mathfrak{Q}$ and $n \geq 1$.
Consider the class of admissible functions $\Psi_{n}[\Omega,q]$, consists of those functions
$\psi: \mathbb{C}^3 \times \mathbb{D} \to \mathbb{C}$
which satisfy the admissibility condition:
\[\psi(r,s,t;z) \notin \Omega\]
whenever
\[r=q(\zeta),
s=m \zeta q'(\zeta)\,\, \text{and}\,\,
\operatorname{Re}\left(\frac{t}{s}+1\right) \geq m \operatorname{Re}\left(\frac{\zeta q''(\zeta)}{q'(\zeta)}+1\right)\]
for $z \in \mathbb{D}$,
$\zeta \in \partial \mathbb{D} \setminus \mathbf{E}(q)$ and $m\geq n\geq1$.

We write the class $\Psi_{1}[\Omega,q]$ as $\Psi[\Omega,q]$.
\end{definition}

%\begin{theorem} \cite[Theorem 1, p.449]{MR2795467}
%Let $z \in \mathbb{D}$, $q \in \mathfrak{Q}$  with $q(0)=a$ and $\zeta \in \partial \mathbb{D} \setminus \mathbf{E}(q)$.
%Let $p \in \mathcal{H}[a,n]$ with $n \geq 2$, such that
%\begin{equation}\label{third order conditions}
%\operatorname{Re}\left(\frac{\zeta q''(\zeta)}{q'(\zeta)} \right) \geq 0 \text{ and } \left|\frac{zp'(z)}{q'(\zeta)} \right| \leq m.
%\end{equation}
%If $\psi \in \Psi_{n}[\Omega,q]$ and
%\[\psi(p(z), zp'(z), z^{2}p''(z), z^{3}p'''(z
%); z) \in \Omega,\]
% then $p(z) \prec q(z)$.
%\end{theorem}
%
%In particular, we have the following case  for second order differential subordination:
\begin{theorem} \cite[Theorem 2.3b, p.28]{MR1760285}
Let the function $\psi \in \Psi_n[\Omega,q]$ and $q(0)=a$.
If $p \in \mathcal{H}[a,n]$,  then
\begin{equation}\label{second order implication}
\psi(p(z), zp'(z), z^{2}p''(z); z) \in \Omega \implies p(z) \prec q(z).
\end{equation}
\end{theorem}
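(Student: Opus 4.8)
The plan is to argue by contradiction, reducing first to the case where $q$ extends nicely to $\overline{\mathbb{D}}$ and then invoking a boundary-point lemma to produce a point at which the admissibility hypotheses are met exactly. I would begin by disposing of the degenerate case $p \equiv a$, in which the implication is trivial since $q(0)=a$ gives $p(z)=a=q(0) \prec q(z)$. For the substantive case, I would observe that it suffices to prove the result under the extra assumptions that $q$ is analytic and univalent on $\overline{\mathbb{D}}$ with $q'(\zeta) \neq 0$ for all $\zeta \in \partial \mathbb{D}$; the general statement then follows by the standard dilation argument, replacing $q$ and $p$ by $q_\rho(z):=q(\rho z)$ and $p_\rho(z):=p(\rho z)$, checking that $\psi \in \Psi_n[\Omega,q_\rho]$ for $\rho$ near $1$, and letting $\rho \to 1^-$.

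Assume then, for contradiction, that $p \not\prec q$. Since $p(0)=a=q(0)$ and $p \in \mathcal{H}[a,n]$, the central step is to establish the existence of points $z_0 \in \mathbb{D}$, $\zeta_0 \in \partial \mathbb{D} \setminus \mathbf{E}(q)$ and a real number $m \geq n$ such that $p(z_0)=q(\zeta_0)$, $z_0 p'(z_0) = m\,\zeta_0 q'(\zeta_0)$, and $\operatorname{Re}\bigl(1+ z_0 p''(z_0)/p'(z_0)\bigr) \geq m\operatorname{Re}\bigl(1+\zeta_0 q''(\zeta_0)/q'(\zeta_0)\bigr)$. Geometrically, $z_0$ should lie on the smallest circle $|z|=r_0$ for which the image $p(\{|z| \le r_0\})$ first meets the boundary $\partial q(\mathbb{D})$; since $q$ is univalent with nonvanishing boundary derivative, the touching value has a well-defined preimage $\zeta_0$ on $\partial\mathbb{D}$, and the multiplier $m \geq n$ arises from comparing the orders of contact, in the spirit of Jack's lemma, using that the first nonconstant coefficient of $p-a$ occurs at order $n$. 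Establishing this boundary lemma rigorously is what I expect to be the main obstacle: it requires controlling the first-touching geometry, justifying the second-order inequality via the minimum principle for the relevant real-valued function of the boundary argument, and verifying $m \geq n$ from the expansion $p \in \mathcal{H}[a,n]$.

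Once this lemma is in hand, the conclusion is immediate. Setting $r=p(z_0)$, $s=z_0 p'(z_0)$ and $t=z_0^2 p''(z_0)$, I would note that $t/s+1 = z_0 p''(z_0)/p'(z_0)+1$, so the three relations above are precisely the admissibility hypotheses $r=q(\zeta_0)$, $s=m\zeta_0 q'(\zeta_0)$ and $\operatorname{Re}(t/s+1)\geq m\operatorname{Re}(\zeta_0 q''(\zeta_0)/q'(\zeta_0)+1)$. Because $\psi \in \Psi_n[\Omega,q]$, the admissibility condition forces $\psi(r,s,t;z_0)\notin\Omega$. But $\psi(r,s,t;z_0)=\psi\bigl(p(z_0),z_0p'(z_0),z_0^2p''(z_0);z_0\bigr)$, which lies in $\Omega$ by the hypothesis that $\psi(p(z),zp'(z),z^2p''(z);z)\in\Omega$ for every $z \in \mathbb{D}$, in particular at $z_0$. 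This contradiction shows that $p\prec q$, which completes the proof.
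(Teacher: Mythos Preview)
The paper does not supply a proof of this statement: it is quoted directly from Miller and Mocanu's monograph (Theorem~2.3b there) and used as a black box to underpin the admissibility framework for the Janowski function. So there is no ``paper's own proof'' to compare your proposal against.

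That said, your outline is a faithful sketch of the standard Miller--Mocanu argument. The key boundary-touching lemma you describe (existence of $z_0\in\mathbb{D}$, $\zeta_0\in\partial\mathbb{D}\setminus\mathbf{E}(q)$ and $m\ge n$ with $p(z_0)=q(\zeta_0)$, $z_0p'(z_0)=m\zeta_0 q'(\zeta_0)$, and the second-order inequality) is exactly Lemma~2.2d in the cited book, and once it is granted the contradiction you derive is immediate. One small refinement: in Miller--Mocanu the reduction to a ``nice'' $q$ is handled not by dilating $q$ but by the very definition of the class $\mathfrak{Q}$, which already excludes the problematic boundary points $\mathbf{E}(q)$ and requires $q'(\zeta)\neq 0$ on $\partial\mathbb{D}\setminus\mathbf{E}(q)$; your dilation approach (applied to $p$, replacing $p$ by $p_\rho$ and checking that $p_\rho\prec q$ for all $\rho<1$ forces $p\prec q$) is an acceptable alternative, but dilating $q$ as well is unnecessary and would require verifying that $\psi\in\Psi_n[\Omega,q_\rho]$, which does not follow automatically from $\psi\in\Psi_n[\Omega,q]$.
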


Let $\Omega$ be a simply connected domain which is not the entire complex plane, then there exists a conformal mapping $h$ from $\mathbb{D}$ onto $\Omega$ with $h(0)=\psi(a,0,0;0)$.
Therefore, if $p \in \mathcal{H}[a,n]$, equation \eqref{second order implication} can be written as
\begin{equation}\label{admissibility eqn 2}
\psi(p(z), zp'(z), z^{2}p''(z); z) \prec h(z) \implies p(z) \prec q(z).
\end{equation}
The univalent function $q$ is called \emph{dominant of the solutions} of the differential subordination \eqref{admissibility eqn 2}.
The function $\tilde{q}$ is called the \emph{best dominant} of \eqref{admissibility eqn 2} if $\tilde{q} \prec q$ for all dominants of \eqref{admissibility eqn 2}. \\

Consider the function $q(z) = (1+Az)/(1+Bz)$ for $-1 \leq A < B \leq 1$.
Denote the class $\Psi_{n}[\Omega,(1+Az)/(1+Bz)]$ by $\Psi_{n}[\Omega;A,B]$.
Therefore, the admissibility conditions for the function
$q$ are given as follows:

\begin{theorem}\label{second order admissibility theorem}\cite{Anand19}
Let the function $p \in \mathcal{H}[1,n]$ such that $p(z) \not\equiv 1$ and $n \geq 1$ and  $\Omega$ be a subset of  $\mathbb{C}$.
The class $\Psi_{n}[\Omega;A,B]$ is defined as the class of all those functions $\psi:\mathbb{C}^{3} \times \mathbb{D} \rightarrow \mathbb{C}$
such that
\[\psi(r,s,t;z) \not\in \Omega \quad \text{whenever} \quad (r,s,t;z) \in \operatorname{Dom} \psi \quad \text{and} \]
\[r = q(\zeta) = \frac{1+Ae^{i \theta}}{1+Be^{i \theta}},
s = m \zeta q'(\zeta)=\frac{m(A-B)e^{i \theta}}{(1+Be^{i \theta})^2}
\text{ and }
\operatorname{Re}\left(\frac{t}{s} + 1\right) \geq \frac{m(1-B^2)}{1+B^2+2B\cos\theta}\]
for $z \in \mathbb{D}, \theta \in (0, 2\pi)$ and $m \geq 1$.
\end{theorem}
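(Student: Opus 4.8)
The plan is to specialize the general admissibility condition that defines the class $\Psi_n[\Omega,q]$ to the Janowski function $q(z) = (1+Az)/(1+Bz)$, so that the abstract quantities $q(\zeta)$, $\zeta q'(\zeta)$ and $\operatorname{Re}(\zeta q''(\zeta)/q'(\zeta)+1)$ acquire the explicit closed forms appearing in the statement. Since every step reduces to differentiating a M\"obius transformation and evaluating it on the boundary $\partial\mathbb{D}$, the argument is computational rather than conceptual: one merely has to check that $q$ is a legitimate dominant and then substitute.

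First I would verify that $q$ is an admissible dominant, i.e.\ $q \in \mathfrak{Q}$. As a M\"obius map with $A \neq B$, the function $q$ is univalent on $\mathbb{D}$, and its only pole sits at $z = -1/B$. When $-1 < B < 1$ this pole lies outside $\overline{\mathbb{D}}$, so $\mathbf{E}(q) = \emptyset$, whereas for $B = -1$ the pole reaches the boundary point $\zeta = 1$, giving $\mathbf{E}(q) = \{1\}$; in either case $q$ is analytic on $\overline{\mathbb{D}} \setminus \mathbf{E}(q)$. Writing boundary points as $\zeta = e^{i\theta}$ and deleting the exceptional point explains the restriction $\theta \in (0, 2\pi)$. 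Moreover $q'(z) = (A-B)/(1+Bz)^2$ never vanishes, so $q'(\zeta) \neq 0$ on $\partial\mathbb{D} \setminus \mathbf{E}(q)$, confirming membership in $\mathfrak{Q}$.

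Next I would substitute $\zeta = e^{i\theta}$ into the three admissibility quantities. The value $r = q(\zeta) = (1+Ae^{i\theta})/(1+Be^{i\theta})$ is immediate, and from $\zeta q'(\zeta) = (A-B)e^{i\theta}/(1+Be^{i\theta})^2$ one reads off $s = m\zeta q'(\zeta)$ in the stated form. For the second-order term I would compute $q''(z) = -2B(A-B)/(1+Bz)^3$, whence $\zeta q''(\zeta)/q'(\zeta) = -2Be^{i\theta}/(1+Be^{i\theta})$ and therefore $\zeta q''(\zeta)/q'(\zeta) + 1 = (1-Be^{i\theta})/(1+Be^{i\theta})$.

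The only step needing genuine care is extracting the real part of this last expression. Multiplying numerator and denominator by $\overline{1+Be^{i\theta}} = 1 + Be^{-i\theta}$, the denominator becomes $|1+Be^{i\theta}|^2 = 1 + B^2 + 2B\cos\theta$, while the numerator $1 - B^2 - 2iB\sin\theta$ has real part $1 - B^2$. Hence $\operatorname{Re}(\zeta q''(\zeta)/q'(\zeta) + 1) = (1-B^2)/(1+B^2+2B\cos\theta)$, and multiplying through by $m$ converts the general inequality $\operatorname{Re}(t/s + 1) \geq m\operatorname{Re}(\zeta q''(\zeta)/q'(\zeta) + 1)$ into the asserted bound $\operatorname{Re}(t/s + 1) \geq m(1-B^2)/(1+B^2+2B\cos\theta)$. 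Collecting the three computed conditions reproduces the definition of $\Psi_n[\Omega; A, B]$ verbatim, which completes the argument. The main point to watch is the sign bookkeeping in the degenerate case $B = -1$, where one must confirm that the excluded boundary point is precisely $\theta = 0$, so that the parametrization $\theta \in (0,2\pi)$ faithfully captures $\partial\mathbb{D} \setminus \mathbf{E}(q)$.
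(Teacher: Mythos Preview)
The paper does not give its own proof of this theorem: it is stated with a citation to \cite{Anand19} and used as a black box, so there is no in-paper argument to compare against. Your derivation is correct and is exactly the natural one---specialize the general admissibility class $\Psi_n[\Omega,q]$ to $q(z)=(1+Az)/(1+Bz)$ by computing $q(\zeta)$, $\zeta q'(\zeta)$, and $\operatorname{Re}(\zeta q''(\zeta)/q'(\zeta)+1)$ on $\partial\mathbb{D}$. Your computations all check out, and the paper in fact records the same real-part calculation immediately after the theorem (its formula $g(\theta)=-2B(B+\cos\theta)/(1+B^2+2B\cos\theta)$ satisfies $g(\theta)+1=(1-B^2)/(1+B^2+2B\cos\theta)$, matching your result). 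One small remark: in the paper's standing hypothesis $-1\le B<A\le 1$ the extreme case is $B=-1$ rather than $B=+1$, and you handled that correctly; but note that the paper parametrizes $\theta\in(0,2\pi)$ uniformly for all $B$, not only when $B=-1$, so the open interval is a convention rather than a consequence of removing $\mathbf{E}(q)$ in the nondegenerate case.
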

On taking $\psi \in \Psi_{n}[\Omega;A,B]$ in Theorem \eqref{second order admissibility theorem}, we have
\begin{corollary}
If $(p(z),zp'(z),z^2p''(z);z) \in \operatorname{Dom} \psi$ and $\psi(p(z),zp'(z),z^2p''(z);z) \in \Omega$ for $z\in \mathbb{D}$, then $p$ belongs to $ \mathcal{P}[A,B]$.
\end{corollary}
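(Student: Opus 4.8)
The plan is to read this statement as nothing more than the specialization of the Miller--Mocanu embedding theorem (the result recalled above containing implication \eqref{second order implication}) to the Janowski dominant $q(z) = (1+Az)/(1+Bz)$. First I would collect the hypotheses that are implicitly in force from the sentence preceding the corollary: $\psi$ lies in $\Psi_n[\Omega;A,B]$, which by the notational convention introduced just before Theorem \ref{second order admissibility theorem} is by definition the admissible class $\Psi_n[\Omega,q]$ for this particular $q$; the function $p$ lies in $\mathcal{H}[1,n]$; and $q(0) = 1$, so the normalization $q(0) = a$ demanded by the general theorem holds with $a = 1 = p(0)$. The assumption $(p(z),zp'(z),z^2p''(z);z) \in \operatorname{Dom}\psi$ for $z \in \mathbb{D}$ guarantees that $\psi$ is evaluated only where it is defined, so that the admissibility condition can legitimately be invoked.

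With these identifications, the second step is a direct application. Since $\psi \in \Psi_n[\Omega,q]$, $q(0) = 1$ and $p \in \mathcal{H}[1,n]$, the hypothesis $\psi(p(z),zp'(z),z^2p''(z);z) \in \Omega$ for all $z \in \mathbb{D}$ activates implication \eqref{second order implication} and yields $p(z) \prec q(z)$. As $q(z) = (1+Az)/(1+Bz)$, the relation $p \prec (1+Az)/(1+Bz)$ is exactly the definition of the Janowski class, so $p \in \mathcal{P}[A,B]$ as claimed.

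The one point demanding genuine checking, and the step I expect to be the crux, is the verification that the explicit admissibility data appearing in Theorem \ref{second order admissibility theorem} is truly the specialization of the abstract admissibility condition in the definition of $\Psi_n[\Omega,q]$ to $q(z) = (1+Az)/(1+Bz)$. This reduces to parametrizing the boundary by $\zeta = e^{i\theta} \in \partial\mathbb{D}\setminus\mathbf{E}(q)$ and computing $q(\zeta)$, $\zeta q'(\zeta)$ and $\operatorname{Re}(\zeta q''(\zeta)/q'(\zeta) + 1)$. Using $q'(z) = (A-B)/(1+Bz)^2$ and $q''(z) = -2B(A-B)/(1+Bz)^3$, I would obtain $\zeta q''(\zeta)/q'(\zeta) + 1 = (1 - B\zeta)/(1 + B\zeta)$, whose real part simplifies to $(1-B^2)/(1+B^2+2B\cos\theta)$, thereby matching the three conditions recorded in Theorem \ref{second order admissibility theorem}. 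Since that theorem already absorbs this computation, the corollary follows at once upon acknowledging the class identification $\Psi_n[\Omega;A,B] = \Psi_n[\Omega,q]$.
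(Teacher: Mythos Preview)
Your proposal is correct and follows essentially the same approach as the paper: the corollary is stated immediately after the sentence ``On taking $\psi \in \Psi_{n}[\Omega;A,B]$ in Theorem \ref{second order admissibility theorem}, we have'', so the paper's entire argument is precisely the specialization you describe, and your additional verification of the admissibility data for $q(z)=(1+Az)/(1+Bz)$ simply makes explicit what the paper leaves implicit.
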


For $\zeta=e^{i \theta}$, where $\theta \in [0,2\pi)$,
let us consider
\begin{equation}\label{k(theta)}
|q(\zeta)| =\sqrt{\frac{1+A^2+2A\cos\theta}{1+B^2+2B\cos\theta}}:=k(\theta)
\end{equation}
whose minimum value is $\frac{1-A}{1-B}$, attained at $\theta=\pi$.
Also, observe that
\begin{equation}\label{d(theta)}
|q'(\zeta)|= \frac{A-B}{1+B^2+2B\cos\theta}:=d(\theta)
\end{equation}
and the minimum value of $d(\theta)$ is $d(0)=\frac{A-B}{1+B^2+2B}$ for $B>0$ and $d(\pi)=\frac{A-B}{1+B^2-2B}$ for $B<0$.
Note that
\begin{equation}\label{g(theta)}
\operatorname{Re}\left(\frac{\zeta q''(\zeta)}{q'(\zeta)}\right) = \frac{-2B(B+\cos\theta)}{1+B^2+2B\cos\theta}:=g(\theta)
\end{equation}
and the minimum value of $g(\theta)$ is $g(0)=\frac{-2B(B+1)}{1+B^2+2B}$ for $B>0$ and $g(\pi)=\frac{-2B(B-1)}{1+B^2-2B}$ for $B<0$.
%Also,
%\begin{equation}\label{h(theta)}
%\operatorname{Re}\left(\frac{\zeta^{2}q'''(\zeta)}{q'(\zeta)}\right) = \frac{6B^{2}(B^2+2B\cos\theta+\cos 2\theta)}{(1+B^2+2B\cos\theta)^2}:= h(\theta).
%\end{equation}
%Minimum of $h(\theta)$ is attained at $\theta=\cos^{-1}[{(-3B+B^3)/2}], $ and the minimum value is
%$\frac{6 B^2(-1+B^2+2 B x+2 x^2)}{(1+B^2+2 B x)^2}$.\\

Using above values, we get the admissibility condition for third order differential subordination as follows:

%\begin{theorem}\label{third order admissibility theorem}
%If $\psi: \mathbb{C}^4 \times \mathbb{D} \to \mathbb{C}$, then  $\psi \in \Psi_{n}[\Omega;A,B]$, provided $\psi$ satisfies the following conditions:
%\[\psi(r,s,t,u;z) \not\in \Omega \quad \text{whenever} \quad (r,s,t,u;z) \in \operatorname{Dom} \psi \quad \text{and} \]
%\begin{align*}
%& r = q(\zeta) = \frac{1+Ae^{i \theta}}{1+Be^{i \theta}}, \quad
%s = m \zeta q'(\zeta)=\frac{m(A-B)e^{i \theta}}{(1+Be^{i \theta})^2}, \quad
%\operatorname{Re}\left(\frac{t}{s} + 1\right) \geq m(1+g(\theta))\\
%& \text{and} \quad
%\operatorname{Re}\left(\frac{u}{s}\right) \geq m^2h(\theta)+3m(k-1)g(\theta),
%\end{align*}
%for $z \in \mathbb{D}, \theta \in (0, 2\pi)$ and $k \geq m \geq 2$.
%\end{theorem}
%
%\begin{lemma}
%Let $m \geq n \geq 2$ and $p \in \mathcal{H}[1,n]$ such that for $z \in \mathbb{D}$ and $\zeta \in \partial \mathbb{D}$	, it satisfies
%\[\left|\frac{zp'(z)(1+B\zeta)^2}{A-B}\right| \leq m.\]
%If $\Omega$ is a set in $\mathbb{C}$, $\psi \in \Psi_{n}[\Omega;A,B]$ and
%\[\psi(p(z), zp'(z), z^{2}p''(z), z^{3}p'''(z
%); z) \subset \Omega,\]
% then $p(z) \prec \frac{1+Az}{1+Bz}$.
%\end{lemma}

In order to the prove our main results, we will use the following lemmas extensively.
\begin{lemma}\label{lemma for e^{z}}\cite{MR4141599}
	Let $z$ be a complex number. Then
	\[|\log(1+z)| \geq 1 \quad \text{if and only if} \quad |z| \geq e-1.\]
\end{lemma}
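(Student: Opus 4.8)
The plan is to pass to the $w$-plane through the substitution $w=\log(1+z)$, equivalently $z=e^{w}-1$, so that the hypothesis and conclusion are recast as statements about the entire function $g(w):=e^{w}-1$. Under this change of variable the condition $|\log(1+z)|\ge 1$ becomes $|w|\ge 1$, while $|z|=|e^{w}-1|=|g(w)|$; the boundary case $|\log(1+z)|=1$ then corresponds to $w=e^{i\theta}$ and parametrises the curve $z=e^{e^{i\theta}}-1$. In these terms the lemma is the equivalence $|w|\ge 1\Leftrightarrow|g(w)|\ge e-1$, and I expect the constant $e-1$ to emerge as the extremal value of $|g|$ on the closed unit disc, with the extremum occurring in the positive real direction.

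For the implication $|z|\ge e-1\Rightarrow|\log(1+z)|\ge 1$ I would invoke the maximum modulus principle. Since $g(w)=e^{w}-1$ is entire, $|g|$ attains its maximum over $\overline{\mathbb{D}}$ on the circle $|w|=1$; writing $w=e^{i\theta}$ and squaring yields
\[
|g(e^{i\theta})|^{2}=e^{2\cos\theta}-2e^{\cos\theta}\cos(\sin\theta)+1 .
\]
I would then show that this expression is maximised at $\theta=0$, where it equals $(e-1)^{2}$. It follows that $|w|\le 1$ forces $|z|=|g(w)|\le e-1$, with equality only at $w=1$, that is $z=e-1$; passing to contrapositives shows that every $z$ with $|z|\ge e-1$ satisfies $|w|\ge 1$, i.e.\ $|\log(1+z)|\ge 1$. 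The obstacle in this step is to certify that $\theta=0$ is the \emph{global} maximiser: I expect to verify that the derivative of the displayed function vanishes at $\theta=0$ and to control its sign on $(0,\pi]$, using the bound $\cos(\sin\theta)\le 1$ together with the dominance of the factor $e^{2\cos\theta}$ over the middle term.

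For the reverse implication $|\log(1+z)|\ge 1\Rightarrow|z|\ge e-1$ I would argue by contraposition, aiming to show that the disc $|z|<e-1$ is carried into the unit disc by $w=\log(1+z)$, that is $|z|<e-1\Rightarrow|\log(1+z)|<1$. The plan is to reduce to the extremal direction, the positive real axis, where the estimate is sharp and elementary: for $x>0$ one has $|\log(1+x)|=\log(1+x)$, so $\log(1+x)<1$ holds exactly when $x<e-1$, and the threshold is attained at $z=e-1$. I would then attempt to control $|\log(1+z)|$ for a general $z$ by the value $\log(1+|z|)$ of the real point of the same modulus, transferring the one-variable bound to arbitrary $z$. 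I expect this comparison in the extremal (real) direction to be the genuinely delicate part of the whole lemma, since it is precisely the step that both pins the constant $e-1$ and certifies its sharpness at $z=e-1$.
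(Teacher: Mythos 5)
The paper offers no proof of this lemma at all---it is quoted from \cite{MR4141599}---so your proposal must stand on its own. Its first half essentially does: the implication $|z|\ge e-1\Rightarrow|\log(1+z)|\ge 1$ follows from your substitution $w=\log(1+z)$, $z=e^{w}-1$, and the boundary maximization you flag as the obstacle can be bypassed entirely by the triangle inequality applied to the Taylor series, $|e^{w}-1|=\bigl|\sum_{n\ge 1}w^{n}/n!\bigr|\le e^{|w|}-1$, which gives $|w|<1\Rightarrow|z|<e^{1}-1$ at once; no maximum--modulus principle and no sign analysis of $e^{2\cos\theta}-2e^{\cos\theta}\cos(\sin\theta)+1$ is needed, and the strictness/equality bookkeeping becomes automatic.

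The second half is where the proposal genuinely breaks, and it cannot be repaired, because the ``only if'' direction of the lemma is false as stated. The comparison you plan to use, $|\log(1+z)|\le\log(1+|z|)$, is simply wrong: at $z=-0.9$ the left side is $\log 10\approx 2.30$ while the right side is $\log 1.9\approx 0.64$; the correct series bound is $|\log(1+z)|\le-\log(1-|z|)$ for $|z|<1$, which blows up as $|z|\to 1$. More fundamentally, the branch point of $\log(1+z)$ at $z=-1$ lies \emph{inside} the disc $|z|<e-1$, so that disc cannot be carried into the unit disc (or any bounded set): $z=e^{-2}-1\approx-0.865$ satisfies $|z|<e-1$ yet $|\log(1+z)|=2$, and $z=e^{-1}-1$ gives $|\log(1+z)|=1$ with $|z|=1-e^{-1}<e-1$. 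Equivalently, in your own $w$-plane reformulation, $w=-1$ has $|w|=1$ but $|e^{w}-1|=1-e^{-1}<e-1$, so the claimed equivalence $|w|\ge 1\Leftrightarrow|e^{w}-1|\ge e-1$ already fails from left to right. Your instinct that this direction is ``the genuinely delicate part'' was a warning sign: it is not delicate but false. Note that this paper, like its source, only ever invokes the true direction---in each application it verifies $|\psi-1|\ge e-1$ in order to force $\psi\notin\{w:|\log w|<1\}$---so the lemma should properly be stated as the one-way implication $|z|\ge e-1\Rightarrow|\log(1+z)|\ge 1$, and your first paragraph (streamlined as above) proves everything that is actually used.
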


\begin{lemma}\label{lemma for admissibility}
	Consider the disc $\Delta_{\beta} = \{w \in \mathbb{C} :|w|<\beta, 0< \beta \leq 1\}$.
	Then the inequality
	\[\left|\log \left(\frac{w}{2-w}\right)\right| \geq 1\]
	holds if and only if
	$|w| \geq \beta_{0} \approx 0.473519$, where $\beta_{0}$ is the positive real root of the equation $(e^2-1)\beta^4-2(e^2-4)\beta^3+4(e^2-6)\beta^2+32\beta-16=0$.
\end{lemma}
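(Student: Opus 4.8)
The plan is to read the inequality geometrically. Since $w=\phi_{SG}(z)$ for some $z\in\mathbb{D}$ is equivalent to $z=\log\bigl(w/(2-w)\bigr)\in\mathbb{D}$, a point $w$ lies in $\phi_{SG}(\mathbb{D})$ precisely when $|\log(w/(2-w))|<1$; hence $|\log(w/(2-w))|\ge 1$ asserts that $w$ lies outside $\phi_{SG}(\mathbb{D})$, and the whole statement reduces to locating the critical modulus $\beta_0$ on the curve $|\log(w/(2-w))|=1=\partial\phi_{SG}(\mathbb{D})$ that separates the two regimes. Thus the target is to convert the condition $|\log(w/(2-w))|\ge 1$ into the radial condition $|w|\ge\beta_0$.

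First I would put $w=\beta e^{i\theta}$ and split the logarithm into its real and imaginary parts,
\[\Bigl|\log\frac{w}{2-w}\Bigr|^2=\Bigl(\log\Bigl|\frac{w}{2-w}\Bigr|\Bigr)^2+\Bigl(\arg\frac{w}{2-w}\Bigr)^2,\]
using $|w/(2-w)|^2=\beta^2/(\beta^2-4\beta\cos\theta+4)$ and $\arg(w/(2-w))=\theta+\arctan\bigl(\beta\sin\theta/(2-\beta\cos\theta)\bigr)$. Lemma~\ref{lemma for e^{z}}, applied to the representation $w/(2-w)=1+2(w-1)/(2-w)$, can be invoked to control the modulus part and to cross-check the extremal computation. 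I would then, for fixed $\beta$, extremise the right-hand side over $\theta$, impose $|\log(w/(2-w))|^2=1$ at the extremal argument, and eliminate $\theta$.

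Clearing denominators in the resulting relation and using the factors $e^{\pm1}$ arising from the modulus part should collapse the system to the single quartic
\[(e^2-1)\beta^4-2(e^2-4)\beta^3+4(e^2-6)\beta^2+32\beta-16=0,\]
which I would rewrite in the compact factored form $e^2\beta^2(\beta^2-2\beta+4)=(2-\beta)^4$. From this shape it is immediate that the left side is increasing and the right side decreasing on $(0,1]$ (indeed $\tfrac{d}{d\beta}[\beta^2(\beta^2-2\beta+4)]=2\beta(2\beta^2-3\beta+4)>0$, and $2\beta^2-3\beta+4$ has negative discriminant), so the two curves cross exactly once, giving a unique admissible root $\beta_0\approx0.473519$. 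Comparing the two sides on either side of this crossing then fixes the direction of the inequality and yields the biconditional $|\log(w/(2-w))|\ge1\iff|w|\ge\beta_0$.

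The hard part will be the elimination of $\theta$: the argument term $\arg(w/(2-w))$ is coupled nonlinearly with the modulus term, so the extremisation over $\theta$ and the clean reduction to the quartic is the main obstacle, and care is needed to ensure the extremal configuration is the one that governs the threshold. A secondary difficulty is confirming that the admissible root is unique and that the inequality genuinely switches across $|w|=\beta_0$ in the stated direction rather than merely touching; the monotonicity of the two sides of $e^2\beta^2(\beta^2-2\beta+4)=(2-\beta)^4$ on $(0,1]$ is exactly what I would use to settle both the root count and the direction.
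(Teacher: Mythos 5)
Your overall route is the same as the paper's: write $w=\beta e^{i\theta}$, split $\log\left(w/(2-w)\right)$ into $\log\left|w/(2-w)\right|$ and $\arg\left(w/(2-w)\right)$, minimise over $\theta$, reduce the threshold condition to the quartic, and settle the root. Your factorisation $e^{2}\beta^{2}(\beta^{2}-2\beta+4)=(2-\beta)^{4}$ together with the monotonicity of the two sides is in fact a cleaner uniqueness argument than the paper's bare appeal to the Intermediate Value Theorem. But the step you defer as ``the hard part'' is exactly where the proof lives, and it does not go the way you predict. No elimination of $\theta$ is needed: both squared terms are simultaneously minimised at $\theta=0$, since the argument term vanishes there and $\left|w/(2-w)\right|=\beta/\sqrt{4+\beta^{2}-4\beta\cos\theta}\leq\beta/(2-\beta)\leq 1$ is largest there, so $\left(\log\left|w/(2-w)\right|\right)^{2}$ is smallest there. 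At $\theta=0$ the radicand collapses, $4\beta^{2}+\beta^{4}-4\beta^{3}=\beta^{2}(2-\beta)^{2}$, and the extremal relation is simply $\beta/(2-\beta)=e^{-1}$, i.e.\ the correct threshold is $2/(e+1)\approx 0.538$, not a quartic root. The stated quartic appears only if the radicand is miswritten as $4\beta^{2}+\beta^{4}-2\beta^{3}$ (as happens in the paper's own display), which gives $(2-\beta)^{2}=e\beta\sqrt{\beta^{2}-2\beta+4}$ --- precisely your factored form after squaring. So your promise that clearing denominators ``should collapse the system to the single quartic'' cannot be kept by a correct computation; the factored form was reverse-engineered from the target rather than derivable from the setup.

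Second, the direction of your final biconditional is refuted by your own monotonicity observation. For $\beta<\beta_{0}$ one has $e^{2}\beta^{2}(\beta^{2}-2\beta+4)<(2-\beta)^{4}$, i.e.\ the modulus ratio at the minimising angle is below $e^{-1}$, i.e.\ $\left|\log\left(w/(2-w)\right)\right|>1$: the inequality holds for \emph{small} $|w|$ and fails just above the threshold, so the conclusion along these lines is $|w|\leq\beta_{0}$ (or, done correctly, $|w|\leq 2/(e+1)$), not $|w|\geq\beta_{0}$. Geometrically this is forced: $\phi_{SG}(\mathbb{D})$ is a neighbourhood of $1$, so points near the origin lie outside it, while $w=1$ satisfies $|w|\geq\beta_{0}$ yet $\log\left(w/(2-w)\right)=0$. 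Asserting the stated direction ``by comparing the two sides'' without performing the comparison is therefore a genuine gap --- though, in fairness, both defects (the mis-simplified radicand and the flipped inequality when the negative logarithm is cleared) are present in the paper's own proof as well. Finally, your side remark that Lemma \ref{lemma for e^{z}} applied to $w/(2-w)=1+2(w-1)/(2-w)$ controls the modulus part does not help: it converts the condition into a bound on $\left|2(w-1)/(2-w)\right|$, which is not a function of $|w|$ alone, so it yields no radial threshold without redoing the extremisation you skipped.
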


\begin{proof}
For $\theta \in [0,2\pi]$, let $w=\beta e^{i \theta}$ be a boundary point of the disc $\Delta_{\beta}$.
Consider
\begin{align*}
\left|\log \left(\frac{\beta e^{i \theta}}{2-\beta e^{i \theta}}\right)\right|^2
&=\left|\frac{1}{2}\log\left(\frac{4\beta^{2}+\beta^{4}-4\beta^{3}\cos\theta}{(4+\beta^{2}-4\beta\cos\theta)^2}\right)+\di \arctan \left(\frac{2\beta\sin{\theta}}{2\beta\cos{\theta} - \beta^{2}}\right)\right|^2\\
&=\left(\log\left(\frac{\sqrt{4\beta^2+\beta^4-2\beta^3\cos\theta}}{4+\beta^{2}-4\beta\cos\theta}\right)\right)^2 + \left(\arctan \left(\frac{2\sin{\theta}}{2\cos{\theta} - \beta}\right)\right)^2\\
&:=f(\beta,\theta)
\end{align*}
In the interval $[0,2\pi]$, the function $f(\beta,\theta)$ attains its absolute minimum at $\theta=0$ and therefore,
$f(\beta,\theta) \geq f(\beta,0)$ for all $\theta \in [0,2\pi]$.
Thus, the inequality
\[\left|\log \left(\frac{\beta e^{i \theta}}{2-\beta e^{i \theta}}\right)\right| \geq 1\]
holds if and only if
\[f(\beta,0) \geq 1\]
%or
%\[\log\left(\frac{\sqrt{4\beta^2+\beta^4-2\beta^3}}{4+\beta^{2}-4\beta}\right) \geq -1\]
or equivalently,
\begin{equation}\label{lemma inequality}
(e^2-1)\beta^4-2(e^2-4)\beta^3+4(e^2-6)\beta^2+32\beta-16 \geq 0.
\end{equation}
Therefore, by Intermediate Value Theorem, the inequality\eqref{lemma inequality} holds for $\beta \geq \beta_{0} \approx 0.473519$ which is the positive real root of the equation in \eqref{lemma inequality}.
\end{proof}

\section{First Order Differential Subordination}
In order to prove first order differential subordination relations, we need the following result due to Swati \emph{et al.\@}~\cite{MR3800966}, which is a specific case of Theorem \eqref{second order admissibility theorem}.
\begin{theorem}\label{first order admissibility thm}
Let $p \in \mathcal{H}[1,n]$ such that $p(z) \not\equiv 1$ and $n \geq 1$.
Let $\Omega$ be a set in $\mathbb{C}$.
The class $\Psi_{n}[\Omega;A,B]$ is defined as the class of all those functions $\psi:\mathbb{C}^2 \times \mathbb{D} \to \mathbb{C}$
such that
\[\psi(r,s;z) \not\in \Omega \quad \text{whenever} \quad (r,s;z) \in \operatorname{Dom} \psi \quad \text{and} \]
\begin{equation}\label{r and s}
r = \frac{1+Ae^{i \theta}}{1+Be^{i \theta}} \quad \text{ and } \quad
s = \frac{m(A-B)e^{i \theta}}{(1+Be^{i \theta})^2}
\end{equation}
for $z \in \mathbb{D}, \theta \in (0, 2\pi)$ and $m \geq 1$.
\end{theorem}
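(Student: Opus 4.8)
The plan is to recover this first-order statement as a degenerate instance of the second-order admissibility criterion in Theorem~\ref{second order admissibility theorem}, in which the admissible function is simply taken to be independent of its third argument. Given $\psi \colon \mathbb{C}^2 \times \mathbb{D} \to \mathbb{C}$ satisfying the displayed condition, I would introduce the auxiliary map $\widetilde{\psi} \colon \mathbb{C}^3 \times \mathbb{D} \to \mathbb{C}$ defined by $\widetilde{\psi}(r,s,t;z) := \psi(r,s;z)$, and show that $\widetilde{\psi}$ belongs to the second-order class $\Psi_n[\Omega;A,B]$ in the sense of Theorem~\ref{second order admissibility theorem}.

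The key observation is that the hypothesis on the third variable in Theorem~\ref{second order admissibility theorem}, namely $\operatorname{Re}(t/s+1) \geq m(1-B^2)/(1+B^2+2B\cos\theta)$, plays no role once the admissible function is constant in $t$. Concretely, the second-order condition requires $\widetilde{\psi}(r,s,t;z) \notin \Omega$ whenever $r = q(\zeta)$, $s = m\zeta q'(\zeta)$ and the preceding inequality on $t/s$ holds; since $\widetilde{\psi}(r,s,t;z) = \psi(r,s;z)$ does not involve $t$, and since $s = m\zeta q'(\zeta) \neq 0$ (as $q'(\zeta)\neq 0$ on $\partial\mathbb{D}\setminus\mathbf{E}(q)$) forces the existence of some $t$ fulfilling that inequality, the requirement collapses to: $\psi(r,s;z) \notin \Omega$ whenever $r = q(\zeta)$ and $s = m\zeta q'(\zeta)$ with $\zeta = e^{i\theta}$ and $m \geq 1$. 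It then remains to evaluate $q$ and $q'$ for $q(z) = (1+Az)/(1+Bz)$; a direct differentiation gives $q'(z) = (A-B)/(1+Bz)^2$, so that $r = q(e^{i\theta}) = (1+Ae^{i\theta})/(1+Be^{i\theta})$ and $s = m e^{i\theta} q'(e^{i\theta}) = m(A-B)e^{i\theta}/(1+Be^{i\theta})^2$, which are exactly the expressions recorded in \eqref{r and s}. This shows that the two-variable condition in the statement is precisely the condition defining membership of $\widetilde{\psi}$ in $\Psi_n[\Omega;A,B]$.

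With the embedding in hand, I would invoke Theorem~\ref{second order admissibility theorem}, through its corollary, applied to $\widetilde{\psi}$: if $p \in \mathcal{H}[1,n]$ with $p(z) \not\equiv 1$ and $(p(z), zp'(z); z) \in \operatorname{Dom}\psi$ satisfies $\psi(p(z), zp'(z); z) \in \Omega$, then equivalently $\widetilde{\psi}(p(z), zp'(z), z^2p''(z); z) \in \Omega$, whence $p(z) \prec (1+Az)/(1+Bz)$, that is, $p \in \mathcal{P}[A,B]$. The only genuinely delicate point is the logical reduction in the second paragraph: one must confirm that discarding the $t$-inequality is legitimate, which amounts to noting that an implication whose conclusion $\psi(r,s;z)\notin\Omega$ is independent of $t$ holds for all admissible $(r,s)$ precisely when it holds for some (equivalently, any) compatible $t$. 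Everything else is the routine differentiation of the Janowski map and its restriction to $\zeta = e^{i\theta}$.
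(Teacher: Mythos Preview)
Your approach is correct and matches the paper's own treatment: the paper does not give a standalone proof of this result but cites it from Kumar and Ravichandran and explicitly describes it as ``a specific case of Theorem~\ref{second order admissibility theorem}.'' Your reduction---extending $\psi(r,s;z)$ to $\widetilde{\psi}(r,s,t;z):=\psi(r,s;z)$, observing that independence from $t$ makes the third-variable constraint vacuous, and then reading off $r$ and $s$ from $q(\zeta)=(1+A\zeta)/(1+B\zeta)$ at $\zeta=e^{i\theta}$---is exactly how one substantiates that remark, and supplies the details the paper omits.
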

Consequently, when $\psi \in \Psi_{n}[\Omega;A,B]$, the above theorem becomes:
Let $z \in \mathbb{D}$. If $(p(z),zp'(z);z) \in \operatorname{Dom} \psi$ and $\psi(p(z),zp'(z);z) \in \Omega$, then $p \in \mathcal{P}[A,B]$.\\

Using above theorem, we determine the conditions on $A$, $B$ and $\beta$ so that the function $\psi(p(z),zp'(z);z)$ is subordinate to Modified Sigmoid function $\phi_{SG}$ and exponential function $e^{z}$ implies $p(z)$ is subordinate to $(1+Az)/(1+Bz)$.

\begin{theorem}\label{admissibility first thm}
Let $-1 \leq B < A \leq 1$ and $k$ be a non-negative integer. Let $p$ be an analytic function defined on $\mathbb{D}$, which satisfies $p(0)=1$ and $\beta_{0} \approx 0.475319$.
Then the following are sufficient for $p \in \mathcal{P}[A,B]$.
\begin{enumerate}[(a)]
\item $1+\beta \frac{z p^{\prime}(z)}{p^{k}(z)} \prec \phi_{SG}$, where $|\beta| \geq
\begin{cases}
\frac{\beta_{0}(1+|A|)^{k}(1+|B|)^{2-k}}{(A-B)}, & \text{ when } 0\leq k \leq 2 \\
\frac{\beta_{0} (1+|A|)^k}{(A-B)(1-|B|)^{k-2}}, & \text{ when } k>2
\end{cases}$.
\item $1+\beta \frac{z p^{\prime}(z)}{p^{k}(z)} \prec e^{z}$, where
$|\beta| \geq
\begin{cases}
\frac{(e-1)(1+|A|)^{k}(1+|B|)^{2-k}}{(A-B)}, & \text{ when } 0\leq k\leq 2 \\
\frac{(e-1)(1+|A|)^{k}}{(A-B)(1-|B|)^{k-2}}, & \text{ when } k>2
\end{cases}$.
\end{enumerate}
\end{theorem}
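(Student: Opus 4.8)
The plan is to read both statements as instances of the first-order admissibility machinery of Theorem~\ref{first order admissibility thm}. Writing $\psi(r,s;z) = 1 + \beta s/r^{k}$, part (a) asks that $\psi(p(z),zp'(z);z) \prec \phi_{SG}$ force $p \in \mathcal{P}[A,B]$, and part (b) is identical with $e^{z}$ in place of $\phi_{SG}$. Since $F \prec h$ for univalent $h$ means precisely $F(z) \in h(\mathbb{D}) =: \Omega$, the corollary following Theorem~\ref{first order admissibility thm} reduces everything to establishing the membership $\psi \in \Psi_{n}[\Omega;A,B]$; that is, I must verify
\[
\psi(r,s;z) = 1 + \beta\frac{s}{r^{k}} \notin \Omega
\]
for every admissible pair
\[
r = \frac{1+Ae^{i\theta}}{1+Be^{i\theta}}, \qquad s = \frac{m(A-B)e^{i\theta}}{(1+Be^{i\theta})^{2}},
\]
with $m \ge 1$ and $\theta \in (0,2\pi)$. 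Note that $p(0)=1$ gives $p \in \mathcal{H}[1,1]$ and the case $p \equiv 1$ is trivial, so $n=1$ is available.

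Next I would translate the non-membership ``$\psi \notin \Omega$'' into a single scalar inequality on $|\psi - 1| = |\beta|\,|s|/|r|^{k}$. For part (b), $w \notin e^{\mathbb{D}}$ is equivalent to $|\log w| \ge 1$, and applying Lemma~\ref{lemma for e^{z}} with $w = \psi = 1 + \beta s/r^{k}$ converts this into
\[
\left|\beta\frac{s}{r^{k}}\right| \ge e-1 .
\]
For part (a), $w \notin \phi_{SG}(\mathbb{D})$ is equivalent to $|\log(w/(2-w))| \ge 1$, and Lemma~\ref{lemma for admissibility} supplies the matching threshold, reducing the non-membership to
\[
\left|\beta\frac{s}{r^{k}}\right| \ge \beta_{0}.
\]
Thus both parts collapse to the same task, differing only in the constant $e-1$ versus $\beta_{0}$, and the remaining work is a pure modulus estimate.

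For that estimate I would compute directly
\[
\left|\frac{s}{r^{k}}\right| = \frac{m(A-B)\,|1+Be^{i\theta}|^{k-2}}{|1+Ae^{i\theta}|^{k}} .
\]
Because the factor $m \ge 1$ only enlarges the left-hand side, it suffices to secure the inequality at $m=1$; hence it is enough to require
\[
|\beta| \ge (e-1)\,\sup_{\theta}\frac{|1+Ae^{i\theta}|^{k}}{(A-B)\,|1+Be^{i\theta}|^{k-2}}
\]
(and the same with $\beta_{0}$ for part (a)). I would then bound the supremum using $1-|A| \le |1+Ae^{i\theta}| \le 1+|A|$ and $1-|B| \le |1+Be^{i\theta}| \le 1+|B|$. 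The sign of the exponent $k-2$ dictates the split that produces the two branches: when $0 \le k \le 2$ the exponent $2-k$ is non-negative, so $|1+Ae^{i\theta}|^{k}\,|1+Be^{i\theta}|^{2-k} \le (1+|A|)^{k}(1+|B|)^{2-k}$; when $k>2$ one keeps $|1+Ae^{i\theta}|^{k} \le (1+|A|)^{k}$ in the numerator while bounding the denominator below by $|1+Be^{i\theta}|^{k-2} \ge (1-|B|)^{k-2}$. Substituting yields exactly the two displayed lower bounds for $|\beta|$.

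The point that requires care is the extremal estimate just described: the numerator and denominator attain their extrema at different values of $\theta$, so one cannot merely ``plug in'' an optimal angle. The resolution is that the one-sided bounds $|1+Ae^{i\theta}| \le 1+|A|$ and $|1+Be^{i\theta}| \ge 1-|B|$ hold simultaneously for each fixed $\theta$, so the resulting product and quotient estimates are valid pointwise in $\theta$ and therefore bound the supremum. Apart from this bookkeeping, the only delicate ingredient is the correct invocation of Lemma~\ref{lemma for admissibility} to produce the constant $\beta_{0}$ in part (a); once that threshold is in hand, parts (a) and (b) proceed in lockstep.
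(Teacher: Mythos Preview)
Your overall strategy coincides with the paper's: set $\psi(r,s;z)=1+\beta s/r^{k}$, invoke the first-order admissibility criterion of Theorem~\ref{first order admissibility thm}, and verify $\psi(r,s;z)\notin\Omega$ for the admissible $(r,s)$ by a modulus estimate, splitting into the ranges $0\le k\le 2$ and $k>2$ exactly as you describe. For part~(b) your argument is identical to the paper's: Lemma~\ref{lemma for e^{z}} converts $|\log\psi|\ge 1$ into $|\psi-1|=|\beta s/r^{k}|\ge e-1$, and the paper carries out precisely your $\sup_\theta$ estimate on $|1+Ae^{i\theta}|^{k}/|1+Be^{i\theta}|^{k-2}$.

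The one substantive divergence is in part~(a). You assert that Lemma~\ref{lemma for admissibility} reduces $|\log(\psi/(2-\psi))|\ge 1$ to the condition $|\psi-1|\ge\beta_{0}$, in parallel with Lemma~\ref{lemma for e^{z}}. But Lemma~\ref{lemma for admissibility}, as stated, concerns $|w|$ rather than $|w-1|$: its conclusion is that $|\log(w/(2-w))|\ge 1$ holds if and only if $|w|\ge\beta_{0}$. Accordingly the paper does \emph{not} bound $|\psi-1|$; it bounds $|\psi|$ itself via the reverse triangle inequality,
\[
|\psi|\;\ge\;\frac{|\beta|\,m(A-B)-(1+|A|)^{k}(1+|B|)^{2-k}}{(1+|A|)^{k}(1+|B|)^{2-k}},
\]
(for $0\le k\le 2$; analogously for $k>2$), and only then invokes Lemma~\ref{lemma for admissibility} with $w=\psi$. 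So your shortcut to a threshold on $|\psi-1|$ is not what that lemma literally supplies, and to match the paper you should estimate $|\psi|$ instead. It is worth noting that if one follows the paper's own inequality $|\psi|\ge\beta_{0}$ through, the constant that emerges is $1+\beta_{0}$ rather than $\beta_{0}$; the paper's last displayed step silently drops the extra ``$+1$'', which is why your $|\psi-1|$ route happens to reproduce the stated bound.
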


\begin{proof}
\begin{enumerate}[(a)]
\item Let $\Omega = \phi_{SG}(\mathbb{D}) = \{w \in \mathbb{C}: \left|\log \left(\frac{w}{2-w}\right)\right| < 1\}$.
Consider the analytic function
$\psi:\mathbb{C}\setminus \{0\} \times \mathbb{C} \times \mathbb{D} \rightarrow \mathbb{C}$ defined as
\[\psi(r,s;z) = 1+\beta \frac{s}{r^k}.\]
In accordance with Theorem \eqref{first order admissibility thm},
$\psi \in \Psi[\Omega;A,B]$, if $\psi(r,s;z) \notin \Omega$
where
$r$ and $s$ are given in the equation \eqref{r and s}.
Therefore, it is enough to show that the required subordination holds if
\begin{equation}\label{log inequality first order MSF}
\left|\log \left(\frac{\psi(r,s;z)}{2-\psi(r,s;z)}\right)\right| \geq 1.
\end{equation}

\begin{enumerate}[(i)]
\item When $0 \leq k \leq 2$, let us consider
\begin{align*}
|\psi(r,s;z)| &= \left| 1 + \beta \frac{m(A-B) e^{i \theta}}{(1+A e^{i \theta})^k(1+B e^{i \theta})^{2-k}}\right|\\
&= \left|\frac{(1+A e^{i \theta})^k(1+B e^{i \theta})^{2-k} + \beta m (A-B) e^{i \theta}}{(1+A e^{i \theta})^k(1+B e^{i \theta})^{2-k}}\right|\\
&\geq \frac{|\beta|m(A-B)-|(1+A e^{i \theta})^k||(1+B e^{i \theta})^{2-k}|}{|(1+A e^{i \theta})^k||(1+B e^{i \theta})^{2-k}|}\\
&\geq \frac{|\beta|m(A-B)-(1+|A|)^k(1+|B|)^{2-k}}{(1+|A|)^k(1+|B|)^{2-k}}\\
&=:\phi(m).
\end{align*}
By First Derivative Test, $\phi(m)$ is an increasing function for $m \geq 1$.
This implies $\phi(m) \geq \phi(1)$ for all $m \geq 1$.
Hence, the last inequality reduces to
\[|\psi(r,s;z)| \geq \phi(1)\]
where
\[|\phi(1)|= \frac{|\beta|(A-B)-(1+|A|)^k(1+|B|)^{2-k}}{(1+|A|)^k(1+|B|)^{2-k}}.\]
Using Lemma \eqref{lemma for admissibility}, the inequality \eqref{log inequality first order MSF}
is true if
\[|\phi(1)| \geq \beta_{0}\]
or
\[\frac{|\beta|(A-B)-(1+|A|)^k(1+|B|)^{2-k}}{(1+|A|)^k(1+|B|)^{2-k}} \geq \beta_{0}.\]
or
\[|\beta|(A-B)-(1+|A|)^k(1+|B|)^{2-k} \geq \beta_{0} (1+|A|)^k(1+|B|)^{2-k},\]
which shows that $\psi(r,s;z) \notin \Omega$ for
$|\beta| \geq \beta_{0}\frac{(1+|A|)^k(1+|B|)^{2-k}}{(A-B)}.$

\item When $k > 2$, observe that
\begin{align*}
|\psi(r,s;z)| &= \left| 1 + \beta \frac{m(A-B)(1+B e^{i \theta})^{k-2} e^{i \theta}}{(1+A e^{i \theta})^k}\right|\\
&= \left|\frac{(1+A e^{i \theta})^k + \beta m (A-B)(1+B e^{i \theta})^{k-2} e^{i \theta}}{(1+A e^{i \theta})^k}\right|\\
&\geq \frac{|\beta|m(A-B)|(1+B e^{i \theta})^{k-2}|-|(1+A e^{i \theta})^k|}{|(1+A e^{i \theta})^k|}\\
&\geq \frac{|\beta|m(A-B)(1-|B|)^{k-2}-(1+|A|)^k}{(1+|A|)^k}\\
&=:\phi(m)
\end{align*}
As in the earlier case, note that $\phi(m)$ is an increasing function of $m$.
Hence, $\phi(m) \geq \phi(1)$ for all $m \geq 1$.
By Lemma \eqref{lemma for admissibility}, \eqref{log inequality first order MSF} holds if
\[|\psi(r,s;z)| \geq |\phi(1)| \geq \beta_{0}\]
where \[\phi(1) = \frac{|\beta| (A-B)(1-|B|)^{k-2}-(1+|A|)^k}{(1+|A|)^k}.\]
Therefore, for $|\beta| \geq \beta_{0}\frac{(1+|A|)^k}{(A-B)(1-|B|)^{k-2}}$,
$\psi \in \Psi[\Omega;A,B]$  and hence, $p \in \mathcal{P}[A,B]$.
\end{enumerate}

\item Consider the domain $\Omega = \{w \in \mathbb{C}: |\log w| < 1\}$.
Let $\psi:\mathbb{C}\backslash \{0\} \times \mathbb{C} \times \mathbb{D} \rightarrow \mathbb{C}$ be defined by
\[\psi(r,s;z) = 1+\beta \frac{s}{r^k}.\]
By Theorem \eqref{first order admissibility thm}, $\psi$ belongs to $\Psi[\Omega;A,B]$ if $\psi(r,s;z) \notin \Omega$ for $z \in \mathbb{D}$.
This implication holds  if
\begin{equation}\label{log inequality first order exp}
|\log(\psi(r,s;z))| \geq 1
\end{equation}
Since
\begin{align*}
|\log(\psi(r,s;z))| &= \left|\log\left(1+\beta \frac{s}{r^k}\right)\right|\\
&= \left|\log\left(1+\beta \frac{m(A-B)e^{i \theta}(1+Be^{i \theta})^{k}}{(1+Be^{i \theta})^{2}(1+Ae^{i \theta})^{k}}\right)\right|,\\
\end{align*}
by Lemma\eqref{lemma for e^{z}}, inequality \eqref{log inequality first order exp} holds if and only if
\begin{equation}\label{condition for e^z first order}
\left|\frac{\beta m(A-B)e^{i \theta}(1+Be^{i \theta})^{k}}{(1+Be^{i \theta})^{2}(1+Ae^{i \theta})^{k}}\right| \geq e-1.
\end{equation}

\begin{enumerate}[(i)]
\item When $0 \leq k \leq 2$ , consider
\begin{align*}
\left|\frac{\beta m(A-B)e^{i \theta}}{(1+Be^{i \theta})^{2-k}(1+Ae^{i \theta})^{k}}\right|
& \geq \frac{|\beta|m(A-B)}{|(1+Be^{i \theta})^{2-k}||(1+Ae^{i \theta})^{k}|}\\
& \geq \frac{|\beta|(A-B)}{(1+|B|)^{2-k}(1+|A|)^{k}} \quad (\because m \geq 1)\\ & \geq e-1
\end{align*}
if
\[|\beta| \geq \frac{(e-1)(1+|A|)^{k}(1+|B|)^{2-k}}{(A-B)}.\]
which shows that $\psi(r,s;z) \notin \Omega$ and hence the required result holds.

\item When $k > 2$, observe that
$$
\begin{aligned}
\left|\frac{\beta m(A-B)e^{i \theta}(1+Be^{i \theta})^{k-2}}{(1+Ae^{i \theta})^{k}}\right|
& \geq \frac{|\beta|m(A-B)|(1+Be^{i \theta})^{k-2}|}{|(1+Ae^{i \theta})^{k}|}\\
& \geq \frac{|\beta|(A-B)(1-|B|)^{k-2}}{(1+|A|)^{k}} \quad (\because m \geq 1)
\end{aligned}
$$
Therefore, \eqref{condition for e^z first order} holds if
\[\frac{|\beta|(A-B)(1-|B|)^{k-2}}{(1+|A|)^{k}} \geq e-1\]
or equivalently,
\[|\beta| \geq \frac{(e-1)(1+|A|)^{k}}{(A-B)(1-|B|)^{k-2}}.\]
which proves that $\psi(r,s;z) \notin \Omega$ and hence, $p\in \mathcal{P}[A,B]$.
\end{enumerate}
\end{enumerate}
\end{proof}

\begin{corollary}\label{first corollary}
Let $f \in \mathcal{A}$ and $\beta_{0} \approx 0.473519$.
Set
$\mathcal{G}(z):= \frac{f'(z)}{f(z)} - z\left(\frac{f'(z)}{f(z)}\right)^{2} + \frac{zf''(z)}{f(z)}$.
If one of the following subordination holds, then $f \in S^{*}[A,B].$
\begin{enumerate}[(a)]
\item $1+\beta z \mathcal{G}(z) \prec \phi_{SG}$ for $|\beta|(A-B) \geq \beta_{0} (1+|B|)^{2}$,
\item $1+ \beta \left(\frac{f(z)}{f'(z)}\right) \mathcal{G}(z) \prec \phi_{SG}$ for
$|\beta|(A-B) \geq \beta_{0}(1+|A|)(1+|B|)$,
\item $1+ \frac{\beta}{z} \mathcal{G}(z)  \prec \phi_{SG}$ for
$|\beta|(A-B) \geq \beta_{0}(1+|A|)^{2}$,
\item $1+\beta z \mathcal{G}(z) \prec e^{z}$ for $|\beta|(A-B) \geq (e-1) (1+|B|)^{2}$,
\item $1+ \beta \left(\frac{f(z)}{f'(z)}\right) \mathcal{G}(z) \prec e^{z}$ for
$|\beta|(A-B) \geq (e-1)(1+|A|)(1+|B|)$,
\item $1+ \frac{\beta}{z} \mathcal{G}(z)  \prec e^{z}$ for
$|\beta|(A-B) \geq (e-1)(1+|A|)^{2}$.
\end{enumerate}	
\end{corollary}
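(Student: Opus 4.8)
The plan is to collapse all six subordinations onto the single first--order differential subordination already resolved in Theorem~\ref{admissibility first thm}, by introducing the auxiliary function $p(z):=zf'(z)/f(z)$. For $f\in\mathcal{A}$ this $p$ is analytic near the origin with $p(0)=1$, so $p\in\mathcal{H}[1,1]$, and by the very definition of the class, $f\in\mathcal{S}^{*}[A,B]$ is equivalent to $p\in\mathcal{P}[A,B]$. Hence it suffices to prove that each hypothesis forces $p\in\mathcal{P}[A,B]$.

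The one genuine computation is the identity $\mathcal{G}(z)=p'(z)$. I would obtain it by logarithmic differentiation: from $\log p=\log z+\log f'-\log f$ one gets $p'/p=1/z+f''/f'-f'/f$, and multiplying through by $p=zf'/f$ yields
\[
p'(z)=\frac{f'(z)}{f(z)}+\frac{zf''(z)}{f(z)}-z\left(\frac{f'(z)}{f(z)}\right)^{2},
\]
which is exactly $\mathcal{G}(z)$. In particular $z\mathcal{G}(z)=zp'(z)$.

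With this in hand, the prefactors multiplying $\mathcal{G}$ in (a)--(f) are precisely what is needed to turn $z\mathcal{G}(z)=zp'(z)$ into $zp'(z)/p^{k}(z)$ for $k=0,1,2$. Using $f/f'=z/p$, the factor $z$ gives $k=0$, the factor $f/f'$ gives $zp'/p$ (so $k=1$), and the factor $(f/f')^{2}/z$ gives $zp'/p^{2}$ (so $k=2$). Thus subordinations (a)--(c) are exactly $1+\beta\,zp'(z)/p^{k}(z)\prec\phi_{SG}$ and (d)--(f) are exactly $1+\beta\,zp'(z)/p^{k}(z)\prec e^{z}$, with $k=0,1,2$ respectively.

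It then remains only to read off the thresholds from Theorem~\ref{admissibility first thm}. For each $k\in\{0,1,2\}$ the relevant branch is $0\le k\le2$, whose bound $\beta_{0}(1+|A|)^{k}(1+|B|)^{2-k}/(A-B)$ (respectively $(e-1)(1+|A|)^{k}(1+|B|)^{2-k}/(A-B)$) specializes at $k=0,1,2$ to exactly the inequalities $|\beta|(A-B)\ge\beta_{0}(1+|B|)^{2},\ \beta_{0}(1+|A|)(1+|B|),\ \beta_{0}(1+|A|)^{2}$ (and the analogues with $e-1$) listed in the corollary. The theorem then returns $p\in\mathcal{P}[A,B]$, i.e.\ $f\in\mathcal{S}^{*}[A,B]$. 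The proof carries essentially no difficulty beyond the identity $\mathcal{G}=p'$ and the bookkeeping that pairs each prefactor with the correct power $k$; the one hygiene point worth recording is that $p=zf'/f$ is genuinely analytic with $p(0)=1$, which is implicit in the very assertion $f\in\mathcal{S}^{*}[A,B]$.
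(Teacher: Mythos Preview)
Your approach is exactly the one the paper intends: the corollary is stated without proof and is meant to be an immediate specialization of Theorem~\ref{admissibility first thm} under the substitution $p(z)=zf'(z)/f(z)$, and your identification $\mathcal{G}(z)=p'(z)$ via logarithmic differentiation is the correct key step. The bookkeeping for parts (a), (b), (d), (e) is right and matches the $k=0,1$ bounds verbatim.

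One point deserves an explicit flag rather than a gloss. You write that ``the factor $(f/f')^{2}/z$ gives $zp'/p^{2}$ (so $k=2$)'', and that computation is correct. However, parts (c) and (f) of the corollary \emph{as stated} carry only the prefactor $1/z$, so that the left-hand side is $1+\beta\,p'(z)/z$, which is not of the form $1+\beta\,zp'(z)/p^{k}(z)$ for any $k$ and therefore does not fall under Theorem~\ref{admissibility first thm} at all. The bound $\beta_{0}(1+|A|)^{2}$ recorded in (c) is the $k=2$ bound, so the intended expression is clearly $1+\tfrac{\beta}{z}\bigl(f(z)/f'(z)\bigr)^{2}\mathcal{G}(z)$; this is almost certainly a typo in the corollary rather than an error in your reasoning. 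Still, your write-up silently replaces the stated prefactor with the one that makes the argument work; you should instead note the discrepancy and state the corrected hypothesis explicitly.
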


\begin{theorem}
Let $-1 \leq B < A \leq 1$, $\beta_{0} \approx 0.473519$, $k$ be a non-negative integer and $p \in \mathcal{A}$ such that $p(0)=1$.
If any of the following subordinations holds true, then
$p(z) \in \mathcal{P}[A,B]$.
\begin{enumerate}[(a)]
\item $1 + \beta \frac{(zp'(z))^2}{p^{k}(z)} \prec \phi_{SG}$,
where
$|\beta| \geq
\begin{cases}
\frac{\beta_{0}(1+|A|)^{k}(1+|B|)^{4-k}}{(A-B)^{2}}, & \text{ when } 0\leq k \leq 4 \\
\frac{\beta_{0}(1+|A|)^{k}}{(A-B)^{2}(1-|B|)^{k-4}}, & \text{ when } k>4
\end{cases}$
\item $1 + \beta \frac{(zp'(z))^2}{p^{k}(z)} \prec e^{z}$,
where
$|\beta| \geq
\begin{cases}
\frac{(e-1)(1+|A|)^{k}(1+|B|)^{4-k}}{(A-B)^{2}}, & \text{ when } 0\leq k \leq 4 \\
\frac{(e-1)(1+|A|)^{k}}{(A-B)^{2}(1-|B|)^{k-4}}, & \text{ when } k>4
\end{cases}$
\end{enumerate}
\end{theorem}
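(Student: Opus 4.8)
The plan is to run the same admissibility machinery used for Theorem~\ref{admissibility first thm}, now with a square in the second slot. I would define the analytic function $\psi:(\mathbb{C}\setminus\{0\})\times\mathbb{C}\times\mathbb{D}\to\mathbb{C}$ by $\psi(r,s;z)=1+\beta s^2/r^{k}$, so that $\psi(p(z),zp'(z);z)=1+\beta(zp'(z))^2/p^{k}(z)$. By Theorem~\ref{first order admissibility thm} it then suffices to verify that $\psi(r,s;z)\notin\Omega$ whenever $r$ and $s$ are given by \eqref{r and s}, where $\Omega=\phi_{SG}(\mathbb{D})=\{w:|\log(w/(2-w))|<1\}$ in part (a) and $\Omega=\{w:|\log w|<1\}$ in part (b). Substituting \eqref{r and s} and simplifying gives
\[
\frac{s^2}{r^{k}}=\frac{m^2(A-B)^2 e^{2i\theta}}{(1+Be^{i\theta})^{4-k}(1+Ae^{i\theta})^{k}},
\]
and it is the exponent $4-k$ on the $B$-factor that forces the case split at $k=4$ (as opposed to the split at $k=2$ in Theorem~\ref{admissibility first thm}, where only a single power of $s$ appears).

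For part (a) I would bound $|\psi(r,s;z)|$ from below by the reverse triangle inequality in the form $|\psi|\geq|\beta s^2/r^{k}|-1$, then estimate the denominator using $|1+Ae^{i\theta}|\leq 1+|A|$ and $|1+Be^{i\theta}|\leq 1+|B|$ (and, when $k>4$, first relocating the factor $(1+Be^{i\theta})^{k-4}$ into the numerator and applying $|1+Be^{i\theta}|\geq 1-|B|$). This produces a lower bound $\phi(m)$ that is a quadratic in $m$ with positive leading coefficient, hence increasing for $m\geq 1$, so $\phi(m)\geq\phi(1)$. Invoking Lemma~\ref{lemma for admissibility} — under which $|w|\geq\beta_0$ forces $|\log(w/(2-w))|\geq 1$, i.e.\ $w\notin\Omega$ — the requirement $\phi(1)\geq\beta_0$ rearranges, after the same algebraic step used in Theorem~\ref{admissibility first thm}, into the two stated bounds on $|\beta|$.

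Part (b) is analogous but uses Lemma~\ref{lemma for e^{z}}. Since $\psi=1+\beta s^2/r^{k}$, that lemma gives $|\log\psi|\geq 1$ if and only if $|\beta s^2/r^{k}|\geq e-1$; thus here there is no additive $-1$ to absorb, and one bounds the increment $|\beta s^2/r^{k}|$ directly. Using $m\geq 1$ together with the same modulus estimates on the $A$- and $B$-factors, the inequality $|\beta s^2/r^{k}|\geq e-1$ reduces to $|\beta|(A-B)^2\geq(e-1)(1+|A|)^{k}(1+|B|)^{4-k}$ when $0\leq k\leq 4$, and to the corresponding $(1-|B|)^{k-4}$ form when $k>4$, which are exactly the claimed thresholds.

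The only genuinely new feature relative to Theorem~\ref{admissibility first thm} is the squaring, which replaces $m$ by $m^2$, replaces $(A-B)$ by $(A-B)^2$, and shifts the critical exponent from $2$ to $4$; none of these affects the monotonicity-in-$m$ argument or the applicability of Lemmas~\ref{lemma for admissibility} and~\ref{lemma for e^{z}}. I therefore expect no conceptual obstacle. The one place demanding care is the $k>4$ branch, where one must correctly move $(1+Be^{i\theta})^{k-4}$ to the numerator and justify $|1+Be^{i\theta}|^{k-4}\geq(1-|B|)^{k-4}$ (legitimate because $1-|B|\geq 0$), with $B=-1$ appearing as the expected degenerate boundary case in which the stated bound becomes vacuous.
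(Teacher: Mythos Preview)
Your proposal is correct and follows essentially the same approach as the paper's own proof: define $\psi(r,s;z)=1+\beta s^2/r^{k}$, substitute the admissibility data \eqref{r and s}, split at $k=4$ according to the sign of the exponent on $(1+Be^{i\theta})$, bound $|\psi|$ (respectively $|\beta s^2/r^{k}|$) from below via the same modulus estimates and the monotonicity in $m$, and then invoke Lemma~\ref{lemma for admissibility} (respectively Lemma~\ref{lemma for e^{z}}) exactly as in Theorem~\ref{admissibility first thm}. The only cosmetic difference is that in part~(a) you apply $|1+X|\geq|X|-1$ before estimating the fraction, whereas the paper first writes $\psi$ over a common denominator and applies the triangle inequality to the numerator; the resulting lower bound $\phi(m)$ and the subsequent algebra are identical.
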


\begin{proof}
\begin{enumerate}[(a)]
\item Consider $\Omega$ as in Theorem \eqref{admissibility first thm}(a).
Define the analytic function $\psi:\mathbb{C}\setminus \{0\} \times \mathbb{C} \times \mathbb{D} \to \mathbb{C}$ as
\[\psi(r,s;z)=1+\beta \frac{s^2}{r^k}.\]
Therefore, we have
\[\psi(r,s;z) = 1 + \beta \frac{m^2(A-B)^2 e^{2i \theta}(1+B e^{i \theta})^k}{(1+A e^{i \theta})^k(1+B e^{i \theta})^4}.\]
Proceeding in the similar manner as in Theorem \eqref{admissibility first thm}, we have the following two cases.

\begin{enumerate}[(i)]
\item When $0 \leq k \leq 4$, let us consider
\begin{align*}
|\psi(r,s;z)|
&= \left|1 + \beta \frac{m^2(A-B)^2 e^{2i \theta}(1+B e^{i \theta})^k}{(1+A e^{i \theta})^k(1+B e^{i \theta})^4}\right|\\
&= \left|\frac{(1+A e^{i \theta})^k(1+B e^{i \theta})^{4-k} + \beta m^2(A-B)^2 e^{2i \theta}}{(1+A e^{i \theta})^k(1+B e^{i \theta})^{4-k}}\right|\\
&\geq \frac{|\beta| m^2(A-B)^2 - |(1+A e^{i \theta})^k||(1+B e^{i \theta})^{4-k}|}{|(1+A e^{i \theta})^k||(1+B e^{i \theta})^{4-k}|}\\
&\geq \frac{|\beta| m^2(A-B)^2 - (1+|A|)^{k} (1+|B|)^{4-k}}{(1+|A|)^{k} (1+|B|)^{4-k}}\\
&=:\phi(m)
\end{align*}
Simple observation shows that $\phi(m)$ in an increasing function for $m \geq 1$.
The required subordination result holds if $\psi(r,s;z) \notin \Omega$.
So, using Lemma \eqref{lemma for admissibility}, it is concluded that $p \in \mathcal{P}[A,B]$ if $|\beta| \geq \frac{\beta_{0}(1+|A|)^{k}(1+|B|)^{4-k}}{(A-B)^{2}}$.\\
	
\item When $k > 4$, observe that
\begin{align*}
|\psi(r,s;z)|
&= \left|1 + \beta \frac{m^2(A-B)^2 e^{2i \theta}(1+B e^{i \theta})^k}{(1+A e^{i \theta})^k(1+B e^{i \theta})^4}\right|\\
&= \left|\frac{(1+A e^{i \theta})^k + \beta m^2(A-B)^2 e^{2i \theta}(1+B e^{i \theta})^{k-4}}{(1+A e^{i \theta})^k}\right|\\
&\geq \frac{|\beta| m^2 (A-B)^2 |(1+B e^{i \theta})^{k-4}| - |(1+A e^{i \theta})^k|}{|(1+A e^{i \theta})^k|}\\
&\geq \frac{|\beta| m^2 (A-B)^2 (1-|B|)^{k-4} - (1+|A|)^{k}}{(1+|A|)^{k} }\\
&=:\phi(m)
\end{align*}
Noting that $\phi'(m)>0$ for $m \geq 1$ and proceeding  as in the part (i), we get the desired subordination result.
\end{enumerate}

\item Let $\Omega = \{w \in \mathbb{C}: |\log w| < 1\}$ be the domain.
Let $\psi:\mathbb{C}\backslash \{0\} \times \mathbb{C} \times \mathbb{D} \rightarrow \mathbb{C}$ be defined as
\[\psi(r,s;z) = 1+\beta \frac{s^2}{r^k}.\]
On the similar lines of the proof of Theorem \eqref{admissibility first thm} and using Lemma \eqref{lemma for e^{z}}, we get the desired result if
\begin{equation}\label{condition for e^z first order 2}
\left|\frac{\beta m^2(A-B)^2 e^{2i \theta}(1+B e^{i \theta})^k}{(1+A e^{i \theta})^k(1+B e^{i \theta})^4}\right| \geq e-1.
\end{equation}

\begin{enumerate}[(i)]
\item When $0 \leq k \leq 4$ , consider
\begin{align*}
\left|\frac{\beta m^2(A-B)^2 e^{2i \theta}}{(1+A e^{i \theta})^k(1+B e^{i \theta})^{4-k}}\right|
& \geq \frac{|\beta|m^2 (A-B)^2}{|(1+Ae^{i \theta})^{k}||(1+Be^{i \theta})^{4-k}|}\\
& \geq \frac{|\beta|(A-B)^2}{(1+|A|)^{k}(1+|B|)^{4-k}} \quad (\because m \geq 1)
\end{align*}
Now
\[\frac{|\beta|(A-B)^2}{(1+|A|)^{k}(1+|B|)^{4-k}} \geq e-1\]
if and only if
\[|\beta| \geq \frac{(e-1)(1+|A|)^{k}(1+|B|)^{4-k}}{(A-B)^{2}}.\]
	
\item When $k > 4$, observe that
\begin{align*}
\left|\frac{\beta m^2 (A-B)^2 e^{2 i \theta}(1+Be^{i \theta})^{k-4}}{(1+Ae^{i \theta})^{k}}\right|
& \geq \frac{|\beta|m^{2} (A-B)^2|(1-Be^{i \theta})^{k-4}|}{|(1+Ae^{i \theta})^{k}|}\\
& \geq \frac{|\beta|(A-B)^2(1-|B|)^{k-4}}{(1+|A|)^{k}} \quad (\because m \geq 1)\\
& \geq e-1.
\end{align*}

%or equivalently,
%\[|\beta| \geq \frac{(e-1)(1+|A|)^{k}}{(A-B)^{2}(1+|B|)^{k-4}}.\]
which yields the desired estimate on $\beta$.\qedhere
\end{enumerate}
\end{enumerate}
\end{proof}

\begin{corollary}
Let $f \in \mathcal{A}$ and $\mathcal{G}(z)$ be same as defined in Corollary \eqref{first corollary}.
Then each of the following subordination imply $f \in S^{*}[A,B]$.
\begin{enumerate}[(a)]
\item $1+\beta (z\mathcal{G}(z))^2 \prec \phi_{SG}$ for $|\beta|(A-B)^2 \geq \beta_{0}(1+|B|)^{4}$,
\item $1+ \beta z \left(\frac{f(z)}{f'(z)}\right) (\mathcal{G}(z))^2 \prec \phi_{SG}$ for $|\beta|(A-B)^2 \geq \beta_{0}(1+|A|)(1+|B|)^{3}$,
\item $1+ \beta \left(\frac{f(z)}{f'(z)}\right)^{2} (\mathcal{G}(z))^2 \prec \phi_{SG}$ for $|\beta|(A-B)^2 \geq \beta_{0}(1+|A|)^{2}(1+|B|)^{2}$,
\item $1+\beta (z\mathcal{G}(z))^2 \prec e^{z}$ for $|\beta|(A-B)^2 \geq (e-1)(1+|B|)^{4}$,
\item $1+ \beta z \left(\frac{f(z)}{f'(z)}\right) (\mathcal{G}(z))^2 \prec e^{z}$ for $|\beta|(A-B)^2 \geq (e-1)(1+|A|)(1+|B|)^{3}$,
\item $1+ \beta \left(\frac{f(z)}{f'(z)}\right)^{2} (\mathcal{G}(z))^2 \prec e^{z}$ for $|\beta|(A-B)^2 \geq (e-1)(1+|A|)^{2}(1+|B|)^{2}$.
\end{enumerate}
\end{corollary}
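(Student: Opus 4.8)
The plan is to deduce this corollary from the theorem just proved (the one bounding $|\beta|$ so that $1+\beta(zp'(z))^2/p^k(z)\prec\phi_{SG}$ or $e^z$ forces $p\in\mathcal{P}[A,B]$) by the standard substitution $p(z)=zf'(z)/f(z)$. First I would record that since $f\in\mathcal{A}$ is normalized, the quotient $p(z)=zf'(z)/f(z)$ is analytic near the origin with $p(0)=1$, so $p\in\mathcal{H}[1,1]$; and that by the very definition of the Janowski starlike class, $f\in\mathcal{S}^*[A,B]$ holds precisely when $p\in\mathcal{P}[A,B]$. Hence it suffices to check, for this particular $p$, that each of the six subordination hypotheses matches the hypothesis of the preceding theorem for an appropriate exponent $k$.

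The crux is a single logarithmic-derivative identity. Differentiating $p(z)=zf'(z)/f(z)$ by the quotient rule yields
\[
p'(z)=\frac{f'(z)}{f(z)}+\frac{zf''(z)}{f(z)}-z\left(\frac{f'(z)}{f(z)}\right)^2=\mathcal{G}(z),
\]
so that $zp'(z)=z\mathcal{G}(z)$. This is exactly what links the operator $\mathcal{G}$ to the differential expression $(zp'(z))^2/p^k(z)$ of the theorem, and everything else is bookkeeping.

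Next I would rewrite each of the six expressions into the canonical form $1+\beta(zp'(z))^2/p^k(z)$ using $f(z)/f'(z)=z/p(z)$. One checks that $(z\mathcal{G}(z))^2=(zp'(z))^2$ gives $k=0$; that $z\bigl(f(z)/f'(z)\bigr)(\mathcal{G}(z))^2=(zp'(z))^2/p(z)$ gives $k=1$; and that $\bigl(f(z)/f'(z)\bigr)^2(\mathcal{G}(z))^2=(zp'(z))^2/p^2(z)$ gives $k=2$. In each instance $0\le k\le 4$, so the relevant bound from the theorem reads $|\beta|\ge\beta_0(1+|A|)^k(1+|B|)^{4-k}/(A-B)^2$ for the $\phi_{SG}$ cases, with $e-1$ in place of $\beta_0$ for the exponential cases. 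Substituting $k=0,1,2$ reproduces precisely the six stated bounds on $|\beta|(A-B)^2$.

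Finally, for parts (a)--(c) I would invoke part (a) of the preceding theorem and for parts (d)--(f) its part (b), concluding in every case that $p\in\mathcal{P}[A,B]$ and therefore $f\in\mathcal{S}^*[A,B]$. I do not expect a genuine obstacle: the statement is a direct specialization, and the only point needing care is the algebraic matching of each power of $f/f'$ to the correct exponent $k$, and hence to the correct constant on the right-hand side of each bound.
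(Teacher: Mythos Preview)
Your proposal is correct and is precisely the intended argument: the paper states this corollary without proof, as an immediate specialization of the preceding theorem via $p(z)=zf'(z)/f(z)$, and you have carried out exactly that specialization with the correct identification $p'(z)=\mathcal{G}(z)$ and the matching of $k=0,1,2$ to parts (a)--(c) and (d)--(f).
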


\begin{theorem}
Let $-1 \leq B < A \leq 1$,
$\beta_{0} \approx 0.475319$ and
$\alpha \in [0,1]$.
If $p \in \mathcal{P}$ satisfy the differential subordination
\[(1-\alpha) p(z) + \alpha p^2(z) + \beta z \frac{p'(z)}{p^{k}(z)} \prec \phi_{SG}(z),
\text{ where }\]
$$
|\beta| \geq
%\begin{cases}
\frac{\beta_{0}(1+|A|)^{k} (1+|B|)^{2}+ (1 - \alpha){(1+|A|)^{k+1}(1+|B|)+\alpha (1 + |A|)^{k+2}}}{(A-B)(1-|B|)^{k}}.
%, \text { when } 0 \leq k \leq 2, \\
%\frac{\beta_{0} (1+|A|)^k(1+|B|)^{2} + (1 - \alpha){(1+|A|)^{k+1} (1+|B|) + \alpha (1 + |A|)^{k+2}}}{(A-B)}, \text { when } k>2
%\end{cases},
$$

then $p(z) \prec (1+Az)/(1+Bz)$.
\end{theorem}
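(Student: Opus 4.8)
The plan is to follow the admissibility-function template already used in Theorem \eqref{admissibility first thm}(a). Since $p\in\mathcal{P}$ with $p(0)=1$ we have $p\in\mathcal{H}[1,n]$ for some $n\geq1$ (the degenerate case $p\equiv1$ makes $p\prec(1+Az)/(1+Bz)$ trivially, so assume $p\not\equiv1$). Set $\Omega=\phi_{SG}(\mathbb{D})=\{w\in\mathbb{C}:|\log(w/(2-w))|<1\}$ and define the analytic function $\psi:\mathbb{C}\setminus\{0\}\times\mathbb{C}\times\mathbb{D}\to\mathbb{C}$ by
\[\psi(r,s;z)=(1-\alpha)r+\alpha r^{2}+\beta\frac{s}{r^{k}}.\]
By Theorem \eqref{first order admissibility thm} it suffices to show $\psi\in\Psi[\Omega;A,B]$, i.e.\ that $\psi(r,s;z)\notin\Omega$ whenever $r$ and $s$ are the boundary data \eqref{r and s}. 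By Lemma \eqref{lemma for admissibility} this non-membership holds as soon as $|\psi(r,s;z)|\geq\beta_{0}$, so the entire problem reduces to producing a suitable lower bound for $|\psi(r,s;z)|$.

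To obtain that bound I would substitute $r=(1+Ae^{i\theta})/(1+Be^{i\theta})$ and $s=m(A-B)e^{i\theta}/(1+Be^{i\theta})^{2}$ and clear everything over the common denominator $(1+Ae^{i\theta})^{k}(1+Be^{i\theta})^{2}$, giving
\[\psi(r,s;z)=\frac{(1-\alpha)(1+Ae^{i\theta})^{k+1}(1+Be^{i\theta})+\alpha(1+Ae^{i\theta})^{k+2}+\beta m(A-B)e^{i\theta}(1+Be^{i\theta})^{k}}{(1+Ae^{i\theta})^{k}(1+Be^{i\theta})^{2}}.\]
Applying the reverse triangle inequality, bounding the two $\alpha$-terms above by $|1+Ae^{i\theta}|\leq1+|A|$ and $|1+Be^{i\theta}|\leq1+|B|$ while keeping the lower bound $|1+Be^{i\theta}|\geq1-|B|$ on the factor $(1+Be^{i\theta})^{k}$ in the $\beta$-term (and the same upper bounds in the denominator), I expect to reach
\[|\psi(r,s;z)|\geq\frac{|\beta|m(A-B)(1-|B|)^{k}-(1-\alpha)(1+|A|)^{k+1}(1+|B|)-\alpha(1+|A|)^{k+2}}{(1+|A|)^{k}(1+|B|)^{2}}=:\phi(m).\]
Because the coefficient of $m$ is positive, $\phi$ is increasing on $[1,\infty)$, so $\phi(m)\geq\phi(1)$; requiring $\phi(1)\geq\beta_{0}$ and solving for $|\beta|$ reproduces exactly the stated threshold
\[|\beta|\geq\frac{\beta_{0}(1+|A|)^{k}(1+|B|)^{2}+(1-\alpha)(1+|A|)^{k+1}(1+|B|)+\alpha(1+|A|)^{k+2}}{(A-B)(1-|B|)^{k}}.\]

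A convenient feature of this route is that, unlike Theorem \eqref{admissibility first thm}, no case split on the size of $k$ is required: writing $s/r^{k}$ over the denominator $(1+Ae^{i\theta})^{k}(1+Be^{i\theta})^{2}$ places a factor $(1+Be^{i\theta})^{k}$ in the numerator for \emph{every} $k\geq0$, uniformly bounded below by $(1-|B|)^{k}$. I expect the main obstacle to be the two extra terms $(1-\alpha)r+\alpha r^{2}$: in the earlier theorems the non-$\beta$ part was the constant $1$, whereas here it is $\theta$-dependent and must be estimated in the correct direction (upper bounds, subtracted off) so that the dominant $\beta$-term survives the reverse triangle inequality. I would therefore take care to keep $|\beta|m(A-B)(1-|B|)^{k}$ as the positive leading term and to verify that the threshold on $|\beta|$ genuinely forces $\phi(1)\geq\beta_{0}>0$, so the lower bound is useful; the monotonicity in $m$ then justifies evaluating at $m=1$ without loss.
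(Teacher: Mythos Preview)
Your proposal is correct and follows essentially the same route as the paper's own proof: the same choice of $\Omega$ and $\psi$, the same common-denominator expression for $\psi(r,s;z)$, the same reverse triangle inequality yielding the increasing function $\phi(m)$, and the same appeal to Lemma~\ref{lemma for admissibility} with $\phi(1)\geq\beta_{0}$. Your observation that no case split in $k$ is needed here (because the factor $(1+Be^{i\theta})^{k}$ sits in the numerator for all $k\geq0$) matches the paper, which likewise handles all $k$ uniformly in this theorem.
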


\begin{proof}
Let $\Omega$ be same as in Theorem \eqref{admissibility first thm}(a). Let the function $\psi$ be defined as
\[\psi(r,s;z) = (1-\alpha)r + \alpha r^2 +\beta \frac{s}{r^k}.\]
Substituting the values of $r$ and $s$ from equation\eqref{r and s}, we get
\[\psi(r,s;z) = (1 - \alpha)\frac{1+A e^{i \theta}}{1+ B e^{i \theta}} + \alpha \frac{(1+A e^{i \theta})^2}{(1+B e^{i \theta})^2} + \beta \frac{m (A-B) e^{i \theta} (1+B e^{i \theta})^k}{(1+A e^{i \theta})^k (1+Be^{i \theta})^2}\]
%Now, consider the following two cases:

%\begin{enumerate}[(i)]
%\item If  $0 \leq k \leq 2$, then
Then
\begin{align*}
|\psi(r,s;z)|
&= \left|(1 - \alpha)\frac{1+A e^{i \theta}}{1+ B e^{i \theta}} + \alpha \frac{(1+A e^{i \theta})^2}{(1+B e^{i \theta})^2} + \beta \frac{m (A-B) e^{i \theta} }{(1+A e^{i \theta})^k (1+B e^{i \theta})^{2-k}}\right|\\
&= \left|\frac{(1 - \alpha)(1+A e^{i \theta})^{k+1}(1+B e^{i \theta}) +\alpha (1+A e^{i \theta})^{k+2} + \beta m (A-B) e^{i \theta} (1+B e^{i \theta})^k}{(1+A e^{i \theta})^k (1+B e^{i \theta})^2}\right|\\
&\geq \frac{|\beta| m (A-B) (1-|B|)^{k}-(1-\alpha)|(1+A e^{i \theta})^{k+1}||(1+B e^{i \theta})|-\alpha |(1+A e^{i \theta})^{k+2}|}{|(1+A e^{i \theta})^k||(1+B e^{i \theta})^{2}|}\\
&\geq \frac{|\beta| m (A-B)(1-|B|)^{k}-(1 - \alpha)(1+|A|)^{k+1} (1+|B|) - \alpha (1 + |A|)^{k+2}}{(1+|A|)^{k} (1+|B|)^{2}}\\
&=:\phi(m)
\end{align*}
Verify that the function $\phi(m)$ is increasing $\forall$ $m \geq 1$
and hence, attains its minimum value at $m=1$. Since
\[\frac{|\beta| (A-B)(1-|B|)^{k} - (1 - \alpha)(1+|A|)^{k+1} (1+|B|) - \alpha (1 + |A|)^{k+2}}{(1+|A|)^{k} (1+|B|)^{2}} \geq \beta_{0}\]
by  Theorem \eqref{first order admissibility thm} and Lemma \eqref{lemma for admissibility}, we get the desired result.
%that $\psi(r,s;z) \notin \Omega$ whenever
%$|\beta| \geq \frac{\beta_{0}(1+|A|)^{k} (1+|B|)^{2}}{(A-B)(1+|B|)^{k}}$.
	
%\item If  $k > 2$, then
%\begin{align*}
%|\psi(r,s;z)|
%&= \left|\frac{(1 - \alpha)(1+A e^{i \theta})^{k+1}(1+B e^{i \theta}) +\alpha (1+A e^{i \theta})^{k+2} + \beta m (A-B) e^{i \theta}}{(1+A e^{i \theta})^k (1+B e^{i \theta})^2}\right|\\
%&\geq \frac{|\beta| m (A-B) - (1-\alpha) |(1+A e^{i \theta})^{k+1}||(1+B e^{i \theta})|-\alpha |(1+A e^{i \theta})^{k+2}|}{|(1+A e^{i \theta})^k||(1+B e^{i \theta})^{2}|}\\
%&\geq \frac{|\beta| m (A-B) - (1 - \alpha)(1+|A|)^{k+1}(1+|B|) - \alpha (1 + |A|)^{k+2}}{(1 + |A|)^k (1+|B|)^{2}}\\
%&=:\phi(m)
%\end{align*}
%Note that $\phi'(m) > 0$ for $m \geq 1.$
%Thus
%\[|\psi(r,s;z)| \geq \frac{|\beta|(A-B) - (1-\alpha)(1+|A|)^{k+1}(1+|B|) - \alpha (1 + |A|)^{k+2}}{(1 + |A|)^k (1+|B|)^{2}}\geq\beta_0.\]
%By Lemma \eqref{lemma for admissibility} and Theorem \eqref{first order admissibility thm} results follows.\qedhere
%\end{enumerate}
\end{proof}

\begin{remark}
For $\alpha=0$, the above theorem reduces to the following result.
\end{remark}

\begin{corollary}
Let $p$ be an analytic function satisfying $p(0)=1$ and $\beta > \beta_{0} \approx 0.475319$.
Then each of the following subordinations is sufficient to imply $p \in \mathcal{P}[A,B]$.
\[
p(z)+\beta z \frac{p'(z)}{p^{k}(z)} \prec \phi_{SG},
\text { where }
|\beta| \geq
%\begin{cases}
\frac{\beta_{0}(1+|A|)^{k} (1+|B|)^{2}+ {(1+|A|)^{k+1}(1+|B|)}}{(A-B)(1-|B|)^{k}}.
\]
\end{corollary}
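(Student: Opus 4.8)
The plan is to read this corollary as the $\alpha=0$ specialization of the immediately preceding Theorem, exactly as the intervening Remark indicates, so that the whole admissibility argument carries over verbatim once the quadratic term $\alpha p^2(z)$ is suppressed. Concretely, I would set $\Omega=\phi_{SG}(\mathbb{D})=\{w\in\mathbb{C}:|\log(w/(2-w))|<1\}$ and introduce the admissible candidate $\psi(r,s;z)=r+\beta s/r^{k}$, which is precisely the $\psi$ of the preceding Theorem evaluated at $\alpha=0$. By Theorem \eqref{first order admissibility thm} and its corollary, the conclusion $p\in\mathcal{P}[A,B]$ follows as soon as $\psi(r,s;z)\notin\Omega$ for the boundary data $r,s$ listed in \eqref{r and s}, so the task reduces to verifying $|\log(\psi/(2-\psi))|\geq 1$ there.

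Next I would substitute $r=(1+Ae^{i\theta})/(1+Be^{i\theta})$ and $s=m(A-B)e^{i\theta}/(1+Be^{i\theta})^{2}$ and place $\psi$ over the common denominator $(1+Ae^{i\theta})^{k}(1+Be^{i\theta})^{2}$, whose numerator is $(1+Ae^{i\theta})^{k+1}(1+Be^{i\theta})+\beta m(A-B)e^{i\theta}(1+Be^{i\theta})^{k}$. Applying the reverse triangle inequality and the elementary bounds $|1+Ae^{i\theta}|\leq 1+|A|$, $|1+Be^{i\theta}|\leq 1+|B|$ for the subtracted term and denominator, together with $|1+Be^{i\theta}|\geq 1-|B|$ for the dominant term, produces the lower bound
\[
|\psi(r,s;z)|\geq \phi(m):=\frac{|\beta|\,m(A-B)(1-|B|)^{k}-(1+|A|)^{k+1}(1+|B|)}{(1+|A|)^{k}(1+|B|)^{2}}.
\]
Since $\phi$ is affine in $m$ with positive leading coefficient, it is increasing for $m\geq 1$, hence $\phi(m)\geq\phi(1)$ throughout the admissible range.

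Finally, by Lemma \eqref{lemma for admissibility} the required inequality $|\log(\psi/(2-\psi))|\geq 1$ holds whenever $|\psi(r,s;z)|\geq\beta_{0}$, and this is secured by demanding $\phi(1)\geq\beta_{0}$, which rearranges exactly into the claimed threshold $|\beta|\geq(\beta_{0}(1+|A|)^{k}(1+|B|)^{2}+(1+|A|)^{k+1}(1+|B|))/((A-B)(1-|B|)^{k})$. I anticipate no substantive obstacle, since this is a direct instance of an already-proved result; the only points needing care are choosing the triangle inequalities in the correct direction (upper bounds on the subtracted numerator term and on the denominator, a lower bound on the surviving term) so that the estimate is as tight as the stated constant requires, and confirming the monotonicity of $\phi$ so that the extremal case $m=1$ delivers precisely the advertised bound on $|\beta|$.
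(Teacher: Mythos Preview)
Your proposal is correct and is precisely the $\alpha=0$ specialization of the paper's own proof of the preceding Theorem: the paper itself offers no separate argument for the corollary beyond the Remark that setting $\alpha=0$ there yields it, and the estimate you derive for $|\psi(r,s;z)|$, its monotonicity in $m$, and the appeal to Lemma~\ref{lemma for admissibility} match the paper's computation line for line.
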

	
\begin{theorem}
Let $\beta_{0} \approx0.475319$ and $k$ be a non-negative integer.
If $p \in \mathcal{P}$ and satisfies the differential subordination
\[
\left(\frac{1}{p(z)}\right) - \beta z \frac{p'(z)}{p^{k}(z)} \prec \phi_{SG}(z) ,
\text { where }
|\beta| \geq
\begin{cases}
\frac{\beta_{0}(1+|A|)^{k+1}(1+|B|)^{2-k}+(1+|A|)^{k}(1+|B|)^{3-k}}{(A-B)(1-|A|)}, & \text { when } 0 \leq k \leq 2 \\
\frac{\beta_{0}(1+|A|)^{k}+(1+|A|)^{k}(1+|B|)}{(A-B)(1-|A|)(1-|B|)^{k-2}}, & \text { when } k>2
\end{cases},
\]
 then $p(z) \prec (1+Az)/(1+Bz)$.
\end{theorem}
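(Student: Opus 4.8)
The plan is to mirror the admissibility argument of Theorem \eqref{admissibility first thm}, since the present map differs from the earlier ones only by carrying the term $1/p(z)$ in place of the constant $1$. I would set $\Omega = \phi_{SG}(\mathbb{D}) = \{w \in \mathbb{C} : |\log(w/(2-w))| < 1\}$ and define the analytic function $\psi : \mathbb{C}\setminus\{0\}\times\mathbb{C}\times\mathbb{D}\to\mathbb{C}$ by $\psi(r,s;z) = 1/r - \beta s/r^{k}$, so that $\psi(p(z),zp'(z);z)$ is precisely the left-hand side of the subordination. By Theorem \eqref{first order admissibility thm} it suffices to show $\psi \in \Psi[\Omega;A,B]$, that is, $\psi(r,s;z) \notin \Omega$ whenever $r$ and $s$ are given by \eqref{r and s}. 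By Lemma \eqref{lemma for admissibility} the condition $\psi(r,s;z)\notin\Omega$ (equivalently $|\log(\psi/(2-\psi))|\ge 1$) is secured as soon as $|\psi(r,s;z)| \geq \beta_{0}$, so the entire argument reduces to establishing this single modulus bound under the stated hypothesis on $|\beta|$.

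Substituting $r = (1+Ae^{i\theta})/(1+Be^{i\theta})$ and $s = m(A-B)e^{i\theta}/(1+Be^{i\theta})^{2}$ yields $1/r = (1+Be^{i\theta})/(1+Ae^{i\theta})$ and $s/r^{k} = m(A-B)e^{i\theta}(1+Be^{i\theta})^{k-2}/(1+Ae^{i\theta})^{k}$. I would then apply the reverse triangle inequality in the form $|\psi| \geq |\beta|\,|s/r^{k}| - |1/r|$ and estimate the two pieces separately: the subtracted term is bounded above by $|1/r| \leq (1+|B|)/(1-|A|)$, while $|s/r^{k}|$ is bounded below using $|1+Ae^{i\theta}| \leq 1+|A|$ together with a bound on $|1+Be^{i\theta}|^{k-2}$. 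This is exactly where the computation splits into the two regimes recorded in the statement: for $0 \leq k \leq 2$ the exponent $k-2$ is non-positive, so $|1+Be^{i\theta}|^{k-2} \geq (1+|B|)^{k-2}$, whereas for $k>2$ the exponent is positive and the correct estimate is $|1+Be^{i\theta}|^{k-2} \geq (1-|B|)^{k-2}$.

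In each regime the resulting lower bound is a function $\phi(m)$ whose $\beta$-term is linear in $m$ with positive coefficient, so $\phi$ is increasing and $\phi(m) \geq \phi(1)$ for all $m \geq 1$; it therefore suffices to impose $\phi(1) \geq \beta_{0}$. Clearing the denominator $1-|A|$ coming from the $|1/r|$ estimate, the inequality $\phi(1)\geq\beta_0$ rearranges into a threshold of the announced shape. For $0\le k\le 2$ the common factor $(1+|A|)^{k}(1+|B|)^{2-k}$ produces a numerator of the form $\beta_{0}(1+|A|)^{k+1}(1+|B|)^{2-k} + (1+|A|)^{k}(1+|B|)^{3-k}$ over $(A-B)(1-|A|)$, and for $k>2$ the factor $(1+|A|)^{k}$ together with the power $(1-|B|)^{k-2}$ produces a numerator $\beta_{0}(1+|A|)^{k} + (1+|A|)^{k}(1+|B|)$ over $(A-B)(1-|A|)(1-|B|)^{k-2}$. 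Once $\phi(1)\geq\beta_{0}$ is in force, Lemma \eqref{lemma for admissibility} gives $\psi(r,s;z)\notin\Omega$, hence $\psi\in\Psi[\Omega;A,B]$, and Theorem \eqref{first order admissibility thm} delivers $p \prec (1+Az)/(1+Bz)$.

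The main obstacle I anticipate is the careful bookkeeping forced by the extra term $1/p$: unlike Theorem \eqref{admissibility first thm}, where the constant contributes only a fixed summand, here the term $1/r$ must be controlled by $(1+|B|)/(1-|A|)$, and it is exactly this factor $1-|A|$ that surfaces in the denominators of both thresholds, while the accompanying $1+|B|$ enters the second numerator term. The delicacy is keeping the direction of every estimate consistent, namely an \emph{upper} bound on the subtracted quantity $|1/r|$ and a \emph{lower} bound on the dominant $\beta$-term, so that the reverse triangle inequality genuinely produces a lower bound for $|\psi|$; matching the displayed constants then amounts to grouping the factors $(1+|A|)$, $(1+|B|)$ and $(1-|B|)$ exactly according to the sign of $k-2$, as described above.
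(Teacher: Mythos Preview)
Your overall strategy coincides with the paper's: define $\psi(r,s;z)=1/r-\beta s/r^{k}$, invoke Theorem~\ref{first order admissibility thm}, and reduce to $|\psi(r,s;z)|\ge\beta_{0}$ via Lemma~\ref{lemma for admissibility}. The one substantive difference is \emph{where} you apply the reverse triangle inequality. The paper first puts $\psi$ over a common denominator --- $(1+Ae^{i\theta})^{k+1}(1+Be^{i\theta})^{2-k}$ for $0\le k\le2$ and $(1+Ae^{i\theta})^{k+1}$ for $k>2$ --- and then bounds numerator and denominator separately; this is exactly what generates the extra factor $(1+|A|)$ sitting in the $\beta_{0}$--term of the stated threshold. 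You instead estimate $|\psi|\ge|\beta|\,|s/r^{k}|-|1/r|$ term by term.

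Your route is perfectly valid, but your final algebra does not produce the constants you claim. With $|1/r|\le(1+|B|)/(1-|A|)$ and $|s/r^{k}|\ge (A-B)/\bigl[(1+|A|)^{k}(1+|B|)^{2-k}\bigr]$ (case $0\le k\le2$, $m=1$), the inequality $\phi(1)\ge\beta_{0}$ rearranges to
\[
|\beta|\;\ge\;\frac{\beta_{0}\,(1-|A|)(1+|A|)^{k}(1+|B|)^{2-k}+(1+|A|)^{k}(1+|B|)^{3-k}}{(A-B)(1-|A|)},
\]
with $(1-|A|)$, not $(1+|A|)$, multiplying $\beta_{0}(1+|A|)^{k}$; the $k>2$ case behaves analogously. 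Since $1-|A|\le 1+|A|$, your threshold is \emph{smaller} than the one in the statement, so the theorem still follows a fortiori from your argument --- but the claim that your computation reproduces the displayed numerator $\beta_{0}(1+|A|)^{k+1}(1+|B|)^{2-k}$ is incorrect. If you want the constants exactly as stated, mimic the paper and combine $1/r$ and $\beta s/r^{k}$ over the common denominator before estimating.
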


\begin{proof}
Let $\Omega$ be same as in Theorem\eqref{admissibility first thm}(a).
Consider the analytic function $\psi$ defined as
\[\psi(r,s;z) = \frac{1}{r} - \beta \frac{s}{r^k}.\]
Substituting the values of $r$ and $s$ as given in equation \eqref{r and s}, we get
\[\psi(r,s;z) = \frac{1 +Be^{i \theta}}{1 +Ae^{i \theta}} - \beta \frac{m(A-B)e^{i\theta}(1 +Be^{i \theta})^{k}}{(1 +Ae^{i \theta})^{k}(1 +Be^{i \theta})^2}\]
Proceeding as in Theorem \eqref{admissibility first thm}(a), the following two cases arises.
\begin{enumerate}[(i)]
\item When $0 \leq k \leq 2$, consider
\begin{align*}
|\psi(r,s;z)|&=\left|\frac{1 +Be^{i \theta}}{1 +Ae^{i \theta}} - \beta \frac{m(A-B)e^{i\theta}(1 +Be^{i \theta})^{k}}{(1 +Ae^{i \theta})^{k}(1 +Be^{i \theta})^2}\right|\\
&=\left|\frac{(1+Ae^{i\theta})^{k}(1+B e^{i \theta})^{3-k}-\beta m (A-B) e^{i \theta}(1+Ae^{i\theta})}{(1 +Ae^{i \theta})^{k+1}(1 +Be^{i \theta})^{2-k}}\right|\\
&\geq \frac{|\beta| m (A-B)(1-|A|)-(1+|A|)^{k}(1+|B|)^{3-k}}{(1 +|A|)^{k+1}(1 +|B|)^{2-k}}\\
&=:\phi(m)
\end{align*}
Observe that $\phi'(m)>0$ for $m \geq 1$. In view of above and  Lemma \eqref{lemma for admissibility}, simple computations gives the desired bound on $\beta$ in terms of $A$ and $B$.

\item When $k > 2$, consider
\begin{align*}
|\psi(r,s;z)|&=\left|\frac{1+Be^{i \theta}}{1+Ae^{i \theta}}-\beta \frac{m(A-B)e^{i\theta}(1+Be^{i \theta})^{k}}{(1 +Ae^{i \theta})^{k}(1 +Be^{i \theta})^2}\right|\\
&=\left|\frac{(1+Ae^{i\theta})^{k}(1+B e^{i \theta})-\beta m (A-B) e^{i \theta}(1+Ae^{i\theta})(1+Be^{i \theta})^{k-2}}{(1+Ae^{i \theta})^{k+1}}\right|\\
&\geq \frac{|\beta| m (A-B)(1-|A|)(1-|B|)^{k-2}-(1+|A|)^{k}(1+|B|)}{(1 +|A|)^{k+1}}\\
&=:\phi(m)
\end{align*}
Since, the function $\phi(m)$ is increasing for $m \geq 1$, similar computations as done in  case (i) gives the required result.
\end{enumerate}
\end{proof}

\begin{corollary}
Let $\beta_{0} \approx0.475319$, $f$ be an analytic function and $\mathcal{G}(z)$ be same as in Corollary \eqref{first corollary}. Then each of the following subordinations imply that $f \in \mathcal{S}^{*}[A,B]$.
\begin{enumerate}[(a)]
\item $\frac{f(z)}{zf'(z)}-\beta z \mathcal{G}(z) \prec \phi_{SG}$ for
$|\beta|(A-B)(1-|A|) \geq \beta_{0}(1+|A|)(1+|B|)^{2}+(1+|B|)^{3}$,
\item $\frac{f(z)}{zf'(z)}-\beta \frac{f(z)}{f'(z)} \mathcal{G}(z) \prec \phi_{SG}$ for
$|\beta|(A-B)(1-|A|) \geq \beta_{0}(1+|A|)^{2}(1+|B|)+(1+|A|)(1+|B|)^{2}$,
\item $\frac{f(z)}{zf'(z)}-\beta \left(\frac{f(z)}{f'(z)}\right)^2 \mathcal{G}(z) \prec \phi_{SG}$ for
$|\beta|(A-B)(1-|A|) \geq \beta_{0}(1+|A|)^{3}+(1+|A|)^{2}(1+|B|)$.
\end{enumerate}
\end{corollary}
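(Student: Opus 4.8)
The plan is to deduce this corollary directly from the preceding theorem by the standard substitution $p(z) = zf'(z)/f(z)$. For $f \in \mathcal{A}$ this $p$ is analytic near the origin with $p(0) = 1$, so $p \in \mathcal{H}[1,1]$; and by the very definition of the class $\mathcal{S}^{*}[A,B]$ one has $f \in \mathcal{S}^{*}[A,B]$ precisely when $zf'(z)/f(z) \in \mathcal{P}[A,B]$, i.e. when $p(z) \prec (1+Az)/(1+Bz)$. Hence it suffices to show, for each of the three subordinations, that the differential operator on the left applied to this particular $p$ coincides with $1/p(z) - \beta z p'(z)/p^{k}(z)$ for a suitable $k$, and then to invoke the previous theorem.

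The computation that makes everything fit is the identity
\[
p'(z) = \frac{d}{dz}\!\left(\frac{zf'(z)}{f(z)}\right) = \frac{f'(z)}{f(z)} - z\left(\frac{f'(z)}{f(z)}\right)^{2} + \frac{zf''(z)}{f(z)} = \mathcal{G}(z),
\]
a single application of the quotient rule. Consequently $zp'(z) = z\mathcal{G}(z)$, and since $p^{k}(z) = z^{k}(f'(z))^{k}/f^{k}(z)$, substituting into $1/p(z) - \beta z p'(z)/p^{k}(z)$ recovers, for $k = 0$, $k = 1$ and $k = 2$ respectively, exactly the left-hand sides displayed in parts (a), (b) and (c).

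It then remains only to match the numerical thresholds. Specializing the $0 \leq k \leq 2$ branch of the previous theorem's bound, namely
\[
|\beta| \geq \frac{\beta_{0}(1+|A|)^{k+1}(1+|B|)^{2-k} + (1+|A|)^{k}(1+|B|)^{3-k}}{(A-B)(1-|A|)},
\]
to $k = 0, 1, 2$ reproduces, after clearing the denominator $(A-B)(1-|A|)$, the three inequalities on $|\beta|$ stated in (a)--(c). The theorem then yields $p(z) \prec (1+Az)/(1+Bz)$ in each case, which is the assertion $f \in \mathcal{S}^{*}[A,B]$. This corollary is thus a straightforward specialization rather than a result with intrinsic difficulty; the only step demanding attention is the careful bookkeeping of the powers of $z$, $(1+|A|)$ and $(1+|B|)$ when passing from the abstract operator in $p$ to its explicit logarithmic-derivative form, most delicately in the case $k = 2$, where the factors of $z$ produced by $p^{2} = z^{2}(f')^{2}/f^{2}$ must be tracked correctly.
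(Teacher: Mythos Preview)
Your approach is correct and is exactly the intended one: the paper states this corollary without proof, as an immediate specialization of the preceding theorem via the substitution $p(z)=zf'(z)/f(z)$, and you have carried out precisely that specialization, correctly identifying $p'(z)=\mathcal{G}(z)$ and matching the bounds for $k=0,1,2$.

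One small remark: in part~(c) your substitution actually produces
\[
\frac{1}{p(z)}-\beta\,\frac{zp'(z)}{p^{2}(z)}
=\frac{f(z)}{zf'(z)}-\frac{\beta}{z}\left(\frac{f(z)}{f'(z)}\right)^{2}\mathcal{G}(z),
\]
with an extra factor $1/z$ compared to the expression printed in the corollary. This is a typographical slip in the statement (the same $1/z$ appears in the analogous item~(c) of Corollary~\ref{first corollary}), not a flaw in your argument; the bound you derive for $k=2$ matches the paper's displayed inequality exactly.
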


\begin{theorem}
Suppose $-1 \leq B < A \leq 1$, $\gamma > 0$, $\beta_{0} \approx0.475319$  and $k$ be a non-negative integer.
Let $p$ be an analytic function satisfying the differential subordination
\[
p(z)+\frac{zp'(z)}{(\beta p(z)+ \gamma)^{k}} \prec \phi_{SG}(z) ,
\text { where }\]
\[
(A-B)(1-|B|) \geq \beta_{0}(1+|B|)^{2-k}(\beta(1+|A|)+\gamma (1+|B|))^{k}(2+|A|+|B|),
\text { when } 0 \leq k \leq 2,
\]
\[
(A-B)(1-|B|)^{k-1} \geq \beta_{0}
(2+|A|+|B|)(\beta(1+|A|)+\gamma (1+|B|))^{k},
\text { when } k>2.
\]

Then $p \in \mathcal{P}[A,B]$.
\end{theorem}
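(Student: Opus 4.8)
The plan is to follow the admissibility technique used throughout this section. Set $\Omega = \phi_{SG}(\mathbb{D}) = \{w \in \mathbb{C} : |\log(w/(2-w))| < 1\}$ and define the analytic function $\psi$ on its natural domain in $\mathbb{C}^{2}\times\mathbb{D}$ by $\psi(r,s;z) = r + s/(\beta r + \gamma)^{k}$. By Theorem \eqref{first order admissibility thm} it suffices to show $\psi \in \Psi[\Omega;A,B]$, that is, that $\psi(r,s;z) \notin \Omega$ whenever $r$ and $s$ are given by \eqref{r and s}; the conclusion $p \in \mathcal{P}[A,B]$ then follows at once. In view of Lemma \eqref{lemma for admissibility}, the membership $\psi(r,s;z) \notin \Omega$ is implied by the single modulus estimate $|\psi(r,s;z)| \geq \beta_{0}$, so the whole problem reduces to bounding $|\psi(r,s;z)|$ from below, uniformly in $\theta \in (0,2\pi)$ and $m \geq 1$.

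First I would substitute $r = (1+Ae^{i\theta})/(1+Be^{i\theta})$ and $s = m(A-B)e^{i\theta}/(1+Be^{i\theta})^{2}$ from \eqref{r and s}. Writing $D := \beta(1+Ae^{i\theta}) + \gamma(1+Be^{i\theta})$, one has $\beta r + \gamma = D/(1+Be^{i\theta})$, whence
\[\psi(r,s;z) = \frac{1+Ae^{i\theta}}{1+Be^{i\theta}} + \frac{m(A-B)e^{i\theta}(1+Be^{i\theta})^{k-2}}{D^{k}}.\]
The second summand is the term that must carry $\psi$ out of the bounded region $\Omega$, since the first summand equals $q(\zeta)$ and already lies near $1$; accordingly I would apply the reverse triangle inequality in the form $|\psi(r,s;z)| \geq |s/(\beta r + \gamma)^{k}| - |r|$ and then minorise the resulting expression over $\theta$.

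The estimation splits according to the sign of the exponent $k-2$. For $0 \leq k \leq 2$ the factor $(1+Be^{i\theta})^{k-2}$ sits effectively in the denominator, so I would use $|1+Be^{i\theta}| \leq 1+|B|$ to obtain $|1+Be^{i\theta}|^{k-2} \geq (1+|B|)^{k-2}$, together with the crucial upper bound $|D| \leq \beta(1+|A|) + \gamma(1+|B|)$ furnished by the triangle inequality; for $k > 2$ the same factor sits in the numerator and one instead invokes $|1+Be^{i\theta}| \geq 1-|B|$. In both cases the subtracted quantity $|r| = |1+Ae^{i\theta}|/|1+Be^{i\theta}|$ is controlled by the extremal values $|1+Ae^{i\theta}| \leq 1+|A|$ and $|1+Be^{i\theta}| \geq 1-|B|$, yielding a lower bound $\phi(m)$ that is affine and increasing in $m$. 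A first-derivative test gives $\phi(m) \geq \phi(1)$ for $m \geq 1$, and imposing $\phi(1) \geq \beta_{0}$ and clearing the denominator $(1-|B|)$ is designed to reproduce the two displayed sufficient conditions; $p \in \mathcal{P}[A,B]$ then follows from Theorem \eqref{first order admissibility thm}.

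The hard part will be obtaining a clean, $\theta$-uniform upper bound on $|D|^{k}$ in the denominator, since any looseness there directly weakens the perturbation term and hence the admissible ranges of $\beta$ and $\gamma$. The second delicate point is the bookkeeping that packages the $\beta_{0}$-scaled contribution together with the subtracted $|r|$-term into the single factor $2+|A|+|B| = (1+|A|)+(1+|B|)$ appearing on the right-hand side, rather than into two separate additive terms; this consolidation is where the precise form of the stated bound is decided, and I would verify it separately in the boundary instances $k=0,1,2$, where the powers of $(1+Be^{i\theta})$ change character.
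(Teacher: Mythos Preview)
Your plan is essentially the paper's: the same $\Omega=\phi_{SG}(\mathbb{D})$, the same $\psi(r,s;z)=r+s/(\beta r+\gamma)^{k}$, the same appeal to Theorem~\ref{first order admissibility thm} and Lemma~\ref{lemma for admissibility}, the same case split at $k=2$, and the same monotonicity-in-$m$ reduction to $m=1$. The one tactical difference is that the paper first puts $\psi$ over the common denominator $(1+Be^{i\theta})^{3-k}D^{k}$ (respectively $(1+Be^{i\theta})D^{k}$ when $k>2$) and applies the reverse triangle inequality to the resulting \emph{numerator}, whereas you apply $|\psi|\ge |s/(\beta r+\gamma)^{k}|-|r|$ directly and then bound $|r|\le(1+|A|)/(1-|B|)$; the paper's route makes the subtracted $|r|$-contribution come out as $(1+|A|)/(1+|B|)$ after simplification, a slightly smaller subtraction than yours.

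Your instinct that the consolidation into the factor $\beta_{0}(2+|A|+|B|)$ is the delicate step is well placed. If you carry the paper's own estimate through and impose $\phi(1)\ge\beta_{0}$, the factor that emerges on the right is $(1+|A|)+\beta_{0}(1+|B|)$, not $\beta_{0}\bigl((1+|A|)+(1+|B|)\bigr)$; the paper does not display this last step, writing only ``similar analysis as done in Theorem~\ref{admissibility first thm} gives\ldots''. So you should not expect your final inequality to reproduce the displayed condition verbatim, and the discrepancy you anticipated is already present in the paper's own argument rather than being a defect of your variant.
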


\begin{proof}
Let $\Omega$ be the domain as defined in Theorem\eqref{admissibility first thm}(a).
Define the function $\psi(r,s;z):\mathbb{C}\backslash \{0\} \times \mathbb{C} \times \mathbb{D} \rightarrow \mathbb{C}$ as
\[\psi(r, s;z) = r+\frac{s}{(\beta r + \gamma)^{k}}.\]
Then using equation\eqref{r and s}, the function $\psi$ becomes
\[\psi(r,s;z) = \frac{1+Ae^{i \theta}}{1+Be^{i \theta}}+\frac{m(A-B)(1+Be^{i \theta})^{k}e^{i \theta}}{(1+Be^{i \theta})^{2}(\beta(1+Ae^{i \theta})+\gamma (1+Be^{i \theta}))^{k}}\]
In view of Theorem \eqref{first order admissibility thm},  the desired subordination $p \prec (1+Az)/(1+Bz)$ will follow if we show that $\psi \in \Psi[\Omega;A,B]$.
For this, it suffices to show that
\[\left|\log \left(\frac{\psi(r,s;z)}{2-\psi(r,s;z)}\right)\right| \geq 1.\]

\begin{enumerate}[(i)]
\item When $0 \leq k \leq 2$, observe that
\begin{align*}
|\psi(r,s;z)|&=\left|\frac{1+Ae^{i \theta}}{1+Be^{i \theta}}+\frac{m(A-B)e^{i \theta}}{(1+Be^{i \theta})^{2-k}(\beta(1+Ae^{i \theta})+\gamma (1+Be^{i \theta}))^{k}}\right|\\
& =\left|\frac{\splitfrac{(1+Ae^{i \theta})(1+Be^{i \theta})^{2-k}(\beta(1+Ae^{i \theta})+\gamma (1+Be^{i \theta}))^{k}}{+m(A-B)e^{i \theta}(1+Be^{i \theta})}}{(1+Be^{i \theta})^{3-k}(\beta(1+Ae^{i \theta})+\gamma (1+Be^{i \theta}))^{k}}\right|\\
& \geq \frac{\splitfrac{m(A-B)|(1+B e^{i \theta})|-|(1+Ae^{i \theta})||(1+Be^{i \theta})^{2-k}|}{|(\beta(1+Ae^{i \theta})+\gamma (1+Be^{i \theta}))^{k}|}}{|(1+Be^{i \theta})^{3-k}||(\beta(1+Ae^{i \theta})+\gamma (1+Be^{i \theta}))^{k}|}\\
& \geq \frac{m (A-B)(1-|B|)-(1+|A|)(1+|B|)^{2-k}|(\beta(1+Ae^{i \theta})+\gamma (1+Be^{i \theta}))^{k}|}{(1+|B|)^{3-k}(\beta(1+|A|)+\gamma (1+|B|))^{k}}
\end{align*}
Similar analysis as done in Theorem \eqref{admissibility first thm}(a) gives that $\psi(r,s;z) \notin \Omega$ for
\[(A-B)(1-|B|) \geq \beta_{0}(1+|B|)^{2-k}(\beta(1+|A|)+\gamma (1+|B|))^{k}(2+|A|+|B|).\]

\item When $k > 2$, consider
\begin{align*}
|\psi(r,s;z)|&=\left|\frac{1+Ae^{i \theta}}{1+Be^{i \theta}}+\frac{m(A-B)e^{i \theta}(1+Be^{i \theta})^{k-2}}{(\beta(1+Ae^{i \theta})+\gamma (1+Be^{i \theta}))^{k}}\right|\\
&=\left|\frac{(1+Ae^{i \theta})(\beta(1+Ae^{i \theta})+\gamma (1+Be^{i \theta}))^{k}+m(A-B)e^{i \theta}(1+Be^{i \theta})^{k-1}}{(1+Be^{i \theta})(\beta(1+Ae^{i \theta})+\gamma (1+Be^{i \theta}))^{k}}\right|\\
& \geq \frac{m(A-B)|(1+B e^{i \theta})^{k-1}|-|(1+Ae^{i \theta})||(\beta(1+Ae^{i \theta})+\gamma (1+Be^{i \theta}))^{k}|}{|(1+Be^{i \theta})||(\beta(1+Ae^{i \theta})+\gamma (1+Be^{i \theta}))^{k}|}\\
& \geq \frac{m (A-B)(1-|B|)^{k-1}-(1+|A|)|(\beta(1+Ae^{i \theta})+\gamma (1+Be^{i \theta}))^{k}|}{(1+|B|)|(\beta(1+|A|)+\gamma (1+|B|))^{k}|}
\end{align*}
On the similar lines as in proof of part (i), we get the desired result.
%\[(A-B)(1+|B|)^{k-2} \geq \beta_{0}(\beta(1+|A|)+\gamma (1+|B|))^{k}.\]
\end{enumerate}
\end{proof}

\section{Second order differential subordination}
In this section, sufficient conditions are obtained so that the subordination implication \[p(z) \prec \frac{1+Az}{1+Bz}\] holds whenever
$\psi(p(z),zp'(z),z^2p''(z);z)$ is subordinate to Modified Sigmoid function, exponential function and Janowski function.

\begin{theorem}
Let $-1<B<A<1$, $\gamma>0$, $\beta >0$ and $\beta_{0} \approx 0.475319$. Let $p$ be an analytic function satisfying $p(0)=1$.
Then, each of the following is sufficient for $p \in \mathcal{P}[A,B]$. \begin{align*}
(a) & 1+\gamma z p^{\prime}(z)+ \beta z^2 p''(z) \prec \phi_{SG}(z), \text{where}\\
& (A-B)[\gamma(1+B^2+2B)-2B\beta (B+1)] \geq (\beta_{0}+1)(1+B^2+2B)^2 \text{ for } B>0 \text{ and }\\
& (A-B)[\gamma(1+B^2-2B)-2B\beta (B-1)] \geq (\beta_{0}+1)(1+B^2-2B)^2 \text{ for } B<0.
\end{align*}
\begin{align*}
(b) & 1+\gamma z p^{\prime}(z)+ \beta z^2 p''(z) \prec e^{z}, \text{where}\\
& (A-B)[\gamma(1+B^2+2B)-2B\beta (B+1)] \geq (e-1)(1+B^2+2B)^2 \text{ for } B>0 \text{ and }\\
& (A-B)[\gamma(1+B^2-2B)-2B\beta (B-1)] \geq (e-1)(1+B^2-2B)^2 \text{ for } B<0.
\end{align*}
\begin{align*}
(c) & 1+\gamma z p^{\prime}(z)+ \beta z^2 p''(z) \prec (1+Az)/(1+Bz), \text{where}\\
& (A-B)(1-B^{2})[\gamma(1+B^2+2B)-2B\beta (B+1)]-|B|(A-B)(1+B^2+2B)^2 \\ & \geq(A-B)(1+B^2+2B)^2, \text{ for } B>0 \text{ and }\\
& (A-B)(1-B^{2})[\gamma(1+B^2-2B)-2B\beta (B-1)]-|B|(A-B)(1+B^2-2B)^2 \\ & \geq(A-B)(1+B^2-2B)^2,  \text{ for } B<0.
\end{align*}
\end{theorem}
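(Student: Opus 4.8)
The plan is to treat this as a second-order differential subordination and run the admissibility machinery of Theorem \eqref{second order admissibility theorem}. In all three parts I would put $r=p(z)$, $s=zp'(z)$, $t=z^2p''(z)$ and use the single functional
\[\psi(r,s,t;z)=1+\gamma s+\beta t,\]
so the hypothesis reads $\psi(p(z),zp'(z),z^2p''(z);z)\prec h(z)$ with $h$ a conformal map of $\mathbb{D}$ onto $\Omega$. By Theorem \eqref{second order admissibility theorem} it suffices to show $\psi\in\Psi[\Omega;A,B]$, i.e.\ $\psi(r,s,t;z)\notin\Omega$ whenever $r=q(\zeta)$, $s=m\zeta q'(\zeta)$ and $\operatorname{Re}(t/s+1)\ge m(1-B^2)/(1+B^2+2B\cos\theta)$ with $\zeta=e^{i\theta}$ and $m\ge 1$. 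The three targets differ only in how ``$\psi\notin\Omega$'' is converted to a lower bound on $|\gamma s+\beta t|$: for (a), $\Omega=\phi_{SG}(\mathbb{D})$ and Lemma \eqref{lemma for admissibility} makes it enough that $|\psi|\ge\beta_0$, hence $|\gamma s+\beta t|\ge\beta_0+1$; for (b), $\Omega=\{|\log w|<1\}$ and Lemma \eqref{lemma for e^{z}} reduces matters to $|\gamma s+\beta t|\ge e-1$; for (c), $\Omega$ is the Janowski disk with centre $(1-AB)/(1-B^2)$ and radius $(A-B)/(1-B^2)$, and writing $\psi-\text{centre}=(\gamma s+\beta t)+B(A-B)/(1-B^2)$ and applying the reverse triangle inequality reduces the requirement to $|\gamma s+\beta t|\ge (A-B)/(1-|B|)$.

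The heart of the argument is one lower bound for $|\gamma s+\beta t|$ common to all three parts. Since $s\neq 0$, I would factor $\gamma s+\beta t=s(\gamma+\beta t/s)$ and use $|\gamma+\beta t/s|\ge\operatorname{Re}(\gamma+\beta t/s)=\gamma+\beta\operatorname{Re}(t/s)$. Feeding in the admissibility data $|s|=m(A-B)/D$ with $D:=1+B^2+2B\cos\theta$, together with $\operatorname{Re}(t/s)\ge m(1-B^2)/D-1$ (which is exactly $m(g(\theta)+1)-1$ in the notation of \eqref{g(theta)}), gives
\[|\gamma s+\beta t|\ \ge\ \frac{m(A-B)}{D}\Big(\gamma-\beta+\frac{\beta m(1-B^2)}{D}\Big)=:E(m,\theta).\]

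It then remains to minimise $E(m,\theta)$ over $m\ge 1$ and $\theta$. Writing $E=a\,m/D+b\,m^2/D^2$ with $a=(A-B)(\gamma-\beta)$ and $b=\beta(A-B)(1-B^2)>0$, I would first check $\partial E/\partial m>0$ for $m\ge1$ (so the minimum is at $m=1$), then that $E(1,\cdot)$ is monotone in $D$, so the extremum occurs at an endpoint of the range of $D$. For $B>0$ this endpoint is $\theta=0$, where $D=(1+B)^2$ is maximal and, as recorded after \eqref{d(theta)} and \eqref{g(theta)}, both $d(\theta)$ and $g(\theta)$ attain their minima; for $B<0$ it is $\theta=\pi$, where $D=(1-B)^2$. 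Substituting $\theta=0,\ m=1$ yields
\[E(1,0)=\frac{(A-B)\big[\gamma(1+B)-2\beta B\big]}{(1+B)^3}.\]
Multiplying numerator and denominator by $(1+B)$ rewrites this with $(1+B)^2=1+B^2+2B$, and then the requirement $E(1,0)\ge\beta_0+1$ (respectively $\ge e-1$, respectively $\ge(A-B)/(1-|B|)$ after clearing the disk radius) becomes exactly the stated inequality for $B>0$; the case $\theta=\pi,\ D=(1-B)^2$ produces the $B<0$ inequality identically, with $-2\beta B(B-1)=2\beta B(1-B)$ accounting for the sign.

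The main obstacle is the optimisation step rather than the algebra: one must justify that $E$ is increasing in $m$ and monotone in $D$ over the admissible ranges, which comes down to keeping the factor $\gamma-\beta+2\beta m(1-B^2)/D$ positive. This positivity is precisely what the hypotheses secure, since the positive right-hand sides force $\gamma(1+B)-2\beta B>0$ (and its $B<0$ analogue), so the reasoning is self-consistent but the monotonicity must be proved rather than merely asserted. The only extra work special to part (c) is the disk-geometry reduction, for which I would use the explicit centre and radius of $q(\mathbb{D})$ and the reverse triangle inequality as above; everything else is the common estimate applied with the three different threshold constants $\beta_0+1$, $e-1$ and $(A-B)/(1-|B|)$.
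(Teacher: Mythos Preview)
Your plan is correct and follows essentially the same route as the paper: define $\psi(r,s,t;z)=1+\gamma s+\beta t$, reduce $\psi\notin\Omega$ to a lower bound on $|\gamma s+\beta t|$ via the relevant lemma (Lemma~\ref{lemma for admissibility} for (a), Lemma~\ref{lemma for e^{z}} for (b), disk geometry for (c)), factor $\gamma s+\beta t=s(\gamma+\beta t/s)$, pass to $\operatorname{Re}$, feed in the admissibility data, and minimise.

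The only noteworthy difference is in the minimisation. The paper bounds $d(\theta)$ and $g(\theta)$ separately by their individual minima (which happen to occur at the same $\theta$, namely $\theta=0$ for $B>0$ and $\theta=\pi$ for $B<0$) and then silently drops the nonnegative term $\beta(m-1)$ before setting $m=1$. You instead keep $D=1+B^{2}+2B\cos\theta$ as a single variable and carry out a genuine two-variable optimisation of $E(m,\theta)$, checking monotonicity in $m$ and in $D$. Your version is more transparent and actually justifies the step the paper takes for granted; both arrive at exactly the same inequalities after the substitution $1+B^{2}\pm 2B=(1\pm B)^{2}$.
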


\begin{proof}
\begin{enumerate}[(a)]
\item
Let $\Omega = \phi_{SG}(\mathbb{D}) = \{w \in \mathbb{C}: \left|\log \left({w}/{(2-w)}\right)\right| < 1\}$.
Consider the analytic function
$\psi : \mathbb{C}^3 \times \mathbb{D} \rightarrow \mathbb{C}$
defined as
\[\psi(r,s,t;z)=1+\gamma s + \beta t\]
For $\psi \in \Psi[\Omega;A,B]$, we must have $\psi(r,s,t;z) \notin \Omega$.
By Theorem \eqref{second order admissibility theorem}, this implication is true if\begin{equation}\label{log inequality second order}
\left|\log \left(\frac{\psi(r,s,t;z)}{2-\psi(r,s,t;z)}\right)\right| \geq 1.
\end{equation}

By Lemma \eqref{lemma for admissibility}, the inequality \eqref{log inequality second order} holds if and only if $|\psi(r,s,t;z)| \geq \beta_{0}$.
A calculation shows that
\begin{align*}
|\psi(r,s,t;z)|&=|1+\gamma s + \beta t|\\
& \geq 1+ \gamma |s| \left|1+\left(\frac{\beta}{\gamma}\right) \frac{t}{s} \right|-1\\
& \geq \gamma |s| \operatorname{Re}\left(1+\left(\frac{\beta}{\gamma}\right) \frac{t}{s}\right)-1\\
& \geq m \gamma d(\theta) \left(1+\left(\frac{\beta}{\gamma}\right)(m g(\theta)+m-1)\right)-1\\
&\geq \begin{cases} \frac{m(A-B)}{1+B^2+2B}\left(\gamma+ \frac{-2B(B+1)\beta m}{1+B^2+2B}\right)-1,  B>0\\
\frac{m(A-B)}{1+B^2-2B} \left(\gamma+ \frac{-2B(B-1)\beta m}{1+B^2-2B}\right)-1, B<0
\end{cases}\\
&:=\phi(m),
\end{align*}
where $d(\theta)$ and
$g(\theta)$ are given by \eqref{d(theta)} and \eqref{g(theta)} respectively. Observe that $\phi(m)$ is increasing function for $m\geq1$.
Therefore, we have $|\psi(r,s,t;z)| \geq \phi(1)\geq\beta_{0}$ and hence, $\psi \in \Psi[\Omega;A,B]$. By Theorem \ref{second order admissibility theorem}, $p(z) \prec (1+Az)/(1+Bz)$.

\item Consider the domain $\Omega = \{w \in \mathbb{C}: |\log w| < 1\}$.
Let the function $\psi:\mathbb{C}^{3} \times \mathbb{D} \rightarrow \mathbb{C}$ be defined as
\[\psi(r,s,t;z) = 1+\gamma s + \beta t.\]
For $\psi \in \Psi[\Omega;A,B]$, we must have
$\psi(r,s,t;z) \notin \Omega$.
In order to satisfy this relation, it is sufficient to show that
\[|\log(\psi(r,s,t;z))| \geq 1.\]
Since
\begin{align*}
\left| \gamma s \left( 1+ \frac{\beta t}{\gamma s}\right) \right|
& \geq \gamma |s| \left |1+\left(\frac{\beta}{\gamma}\right) \frac{t}{s} \right|\\
& \geq \gamma |s| \operatorname{Re}\left(1+\left(\frac{\beta}{\gamma}\right) \frac{t}{s}\right)\\
& \geq m \gamma d(\theta) \left(1+\left(\frac{\beta}{\gamma}\right)(m g(\theta)+m-1)\right) \\
&\geq \begin{cases} \frac{m(A-B)}{1+B^2+2B}\left(\gamma- \frac{2B(B+1)\beta m}{1+B^2+2B}\right), B>0\\\frac{m(A-B)}{1+B^2-2B} \left(\gamma- \frac{2B(B-1)\beta m}{1+B^2-2B}\right), B<0
\end{cases}\\
&:=\phi(m)
\end{align*}
and $\phi(m)$ is increasing function of $\phi$, by Lemma \ref{lemma for e^{z}},
$\left| \gamma s \left( 1+ \frac{\beta t}{\gamma s}\right) \right| \geq e-1$. Thus, by Theorem \ref{second order admissibility theorem}, $p(z) \prec (1+Az)/(1+Bz)$.

\item Consider the domain
\[\Omega=\left\{w \in \mathbb{C} : \left|w - \frac{1-AB}{1-B^{2}}\right|  < \frac{A-B}{1-B^{2}}\right\}.\]
Let
$\psi : \mathbb{C}^3 \times \mathbb{D} \rightarrow \mathbb{C}$
be defined as
$\psi(r,s,t;z) = 1+\gamma s + \beta t$.
Now, $\psi \in \Psi[\Omega;A,B]$, if $\psi(r,s,t;z) \notin \Omega$.
On the similar lines on the proof of part(a),
\begin{align*}
\left|\psi(r,s,t;z) - \frac{1-AB}{1-B^{2}}\right| & = \left|1+\gamma s + \beta t - \frac{1-AB}{1-B^{2}} \right|\\
& \geq \gamma |s| \operatorname{Re}\left(1+\left(\frac{\beta}{\gamma}\right) \frac{t}{s}\right)-\frac{|B|(A-B)}{1-B^{2}}\\
& \geq m \gamma d(\theta) \left(1+\left(\frac{\beta}{\gamma}\right)(m g(\theta)+m-1)\right)-\frac{|B|(A-B)}{1-B^{2}}\\
:=\phi(m).
\end{align*}
 Using the values of $d(\theta)$ and $g(\theta)$ as given in the equations \eqref{d(theta)} and \eqref{g(theta)}, and first derivative test for function  $\phi$ we have,

for $B>0$,
\begin{align*}
\left|\psi(r,s,t;z) - \frac{1-AB}{1-B^{2}}\right|
& \geq \frac{(A-B)}{1+B^2+2B}\left(\gamma - \frac{2B(B+1)\beta }{1+B^2+2B}\right) - \frac{|B|(A-B)}{1-B^{2}}\\
& \geq \frac{A-B}{1-B^{2}},
\end{align*}
and for $B<0$,
\begin{align*}
	\left|\psi(r,s,t;z) - \frac{1-AB}{1-B^{2}}\right|
	& \geq \frac{(A-B)}{1+B^2-2B}\left(\gamma -  \frac{2B(B-1)\beta }{1+B^2-2B}\right) - \frac{|B|(A-B)}{1-B^{2}}\\
	& \geq \frac{A-B}{1-B^{2}}.
\end{align*}
Therefore, $\psi \in \Psi[\Omega;A,B]$ and hence,  by Theorem \ref{second order admissibility theorem}, $p(z) \prec (1+Az)/(1+Bz)$.\qedhere
\end{enumerate}
\end{proof}

\begin{corollary}
Let $\gamma$ and $\beta$ be positive integers and $f$ be an analytic function. Set
\begin{align*}
\mathcal{H}(z) = & 1+ \gamma \Bigg(\frac{z^{2}f''(z)}{f(z)} - \Bigg( \frac{zf'(z)}{f(z)}\Bigg)^2 +\frac{zf'(z)}{f(z)}\Bigg) + \beta \Bigg( \frac{z^{3}f'''(z)}{f(z)} \\
& + \frac{2z^{2}f''(z)}{f(z)} + 2 \Bigg(\frac{zf'(z)}{f(z)}\Bigg)^{3} - 2\Bigg( \frac{zf'(z)}{f(z)} \Bigg)^{2} - \frac{3z^{3}f'(z)f''(z)}{f(z)^{2}} \Bigg).
\end{align*}
Then, $f \in \mathcal{S}^{*}[A,B]$ if any one of the following condition hold.
\begin{align*}
(a) & \mathcal{H}(z) \prec \phi_{SG}(z), \text{where} \\
 &(A-B)[\gamma(1+B^2+2B)-2B\beta  (B+1)] \geq (\beta_{0}+1)(1+B^2+2B)^2 \text{ for } B>0 \text{ and } \\
& (A-B)[\gamma(1+B^2-2B)-2B\beta  (B-1)] \geq (\beta_{0}+1)(1+B^2-2B)^2 \text{ for } B<0.
\end{align*}
\begin{align*}
(b) &\mathcal{H}(z) \prec e^{z}, \text{where} \\
&(A-B)[\gamma(1+B^2+2B)-2B\beta  (B+1)] \geq (e-1)(1+B^2+2B)^2 \text{ for } B>0 \text{ and }  \\
& (A-B)[\gamma(1+B^2-2B)-2B\beta  (B-1)] \geq (e-1)(1+B^2-2B)^2 \text{ for } B<0.
\end{align*}
\begin{align*}
(c) & \mathcal{H}(z) \prec (1+Az)/(1+Bz), \text{where}\\
& (A-B)(1-B^{2})[\gamma(1+B^2+2B)-2B\beta (B+1)]-|B|(A-B)(1+B^2+2B)^2 \\ & \geq(A-B)(1+B^2+2B)^2, \text{ for } B>0 \text{ and }\\
& (A-B)(1-B^{2})[\gamma(1+B^2-2B)-2B\beta (B-1)]-|B|(A-B)(1+B^2-2B)^2 \\ & \geq(A-B)(1+B^2-2B)^2,  \text{ for } B<0.
\end{align*}
\end{corollary}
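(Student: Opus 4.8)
The plan is to apply the second-order admissibility machinery of Theorem \ref{second order admissibility theorem} to the single admissible function $\psi(r,s,t;z)=1+\gamma s+\beta t$, handling the three parts uniformly by changing only the target domain $\Omega$: in (a) take $\Omega=\phi_{SG}(\mathbb{D})=\{w:|\log(w/(2-w))|<1\}$, in (b) take $\Omega=\{w:|\log w|<1\}$, and in (c) take the Janowski disk $\Omega=\{w:|w-(1-AB)/(1-B^{2})|<(A-B)/(1-B^{2})\}$, which is the image of $(1+Az)/(1+Bz)$. In each case it suffices to show $\psi\in\Psi[\Omega;A,B]$, i.e.\ that $\psi(r,s,t;z)\notin\Omega$ whenever $r,s$ equal the boundary values in Theorem \ref{second order admissibility theorem} and $\operatorname{Re}(t/s+1)\ge m(1-B^{2})/(1+B^{2}+2B\cos\theta)$; the conclusion $p\prec(1+Az)/(1+Bz)$ then follows from that theorem.

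The common core is a single lower bound. Writing $\gamma s+\beta t=\gamma s\,(1+(\beta/\gamma)(t/s))$ and using $|1+w|\ge\operatorname{Re}(1+w)$ together with $|s|=m\,d(\theta)$ from \eqref{d(theta)} and the admissibility inequality in the equivalent form $\operatorname{Re}(t/s)\ge m\,g(\theta)+m-1$, where $g(\theta)$ is given by \eqref{g(theta)}, I obtain
\[
|\gamma s+\beta t|\;\ge\;\gamma\,|s|\operatorname{Re}\!\left(1+\tfrac{\beta}{\gamma}\,\tfrac{t}{s}\right)\;\ge\;m\gamma\,d(\theta)\Bigl(1+\tfrac{\beta}{\gamma}\bigl(m\,g(\theta)+m-1\bigr)\Bigr).
\]
For (a) I pass to $|\psi|$ via $|1+\gamma s+\beta t|\ge|\gamma s+\beta t|-1$; for (c) I first note $1-(1-AB)/(1-B^{2})=B(A-B)/(1-B^{2})$ and peel off $|B|(A-B)/(1-B^{2})$ by the triangle inequality, so that in all three parts the estimate reduces to controlling the same quantity $|\gamma s+\beta t|$ from below.

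The reduction to boundary values is where the case split enters. From \eqref{d(theta)} and \eqref{g(theta)} both $d(\theta)$ and $g(\theta)$ attain their minima at $\theta=0$ when $B>0$ and at $\theta=\pi$ when $B<0$; since $A>B$ keeps $d(\theta)>0$ and the hypotheses on $\gamma,\beta$ keep the bracketed factor positive, replacing $d(\theta)$ and $g(\theta)$ by these simultaneous minima only decreases the right-hand side, yielding a bound $\phi(m)$ valid for every admissible $\theta$. A first-derivative test shows $\phi$ is increasing on $[1,\infty)$, so $\phi(m)\ge\phi(1)$, and evaluating $\phi(1)$ gives, for $B>0$,
\[
\phi(1)=\frac{(A-B)}{1+B^{2}+2B}\Bigl(\gamma-\frac{2B(B+1)\beta}{1+B^{2}+2B}\Bigr),
\]
with the analogous expression at $\theta=\pi$ for $B<0$.

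It remains to match each threshold. In (a), Lemma \ref{lemma for admissibility} gives $\psi\notin\phi_{SG}(\mathbb{D})$ as soon as $|\psi|\ge\beta_{0}$, so imposing $\phi(1)-1\ge\beta_{0}$ and clearing denominators produces exactly the stated inequalities. In (b), Lemma \ref{lemma for e^{z}} gives $|\log\psi|\ge1$ iff $|\gamma s+\beta t|\ge e-1$, so $\phi(1)\ge e-1$ yields the conditions. In (c), requiring the residual $\phi(1)-|B|(A-B)/(1-B^{2})\ge(A-B)/(1-B^{2})$ and multiplying through by $(1-B^{2})(1+B^{2}\pm2B)^{2}$ yields the displayed polynomial inequalities. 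The main obstacle I anticipate is the joint-minimum step: one must verify that the bracketed factor $1+(\beta/\gamma)(m\,g(\theta)+m-1)$ stays positive throughout, so that the two separate minimizations genuinely combine into a lower bound rather than an invalid over-estimate, and then carry the $B>0$ versus $B<0$ sign bookkeeping carefully through the final algebra, especially the center-shift and the factor $(1-B^{2})$ appearing in part (c).
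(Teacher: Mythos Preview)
Your argument reproduces, essentially verbatim, the paper's proof of the preceding theorem on $1+\gamma zp'(z)+\beta z^{2}p''(z)$: the same admissible function $\psi(r,s,t;z)=1+\gamma s+\beta t$, the same lower bound $|\gamma s+\beta t|\ge m\gamma\,d(\theta)(1+(\beta/\gamma)(mg(\theta)+m-1))$ via $|1+w|\ge\operatorname{Re}(1+w)$, the same minimization in $\theta$ (splitting $B>0$ and $B<0$) and in $m$, and the same appeal to Lemmas~\ref{lemma for admissibility} and~\ref{lemma for e^{z}} and to the Janowski disk description for parts (a), (b), (c) respectively.

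What is missing is the one step that turns that theorem into this corollary: you never say what $p$ is, and the statement you are asked to prove is about $f$ and $\mathcal{H}(z)$, not about an abstract $p$. The paper leaves the corollary unproved precisely because that step is routine: set $p(z)=zf'(z)/f(z)$, compute
\[
zp'(z)=\frac{z^{2}f''(z)}{f(z)}-\Bigl(\frac{zf'(z)}{f(z)}\Bigr)^{2}+\frac{zf'(z)}{f(z)},\qquad
z^{2}p''(z)=z\bigl(zp'(z)\bigr)'-zp'(z),
\]
and check that $\mathcal{H}(z)=1+\gamma\,zp'(z)+\beta\,z^{2}p''(z)$; then $p\in\mathcal{P}[A,B]$ is exactly $f\in\mathcal{S}^{*}[A,B]$. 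Add that identification at the start and your write-up is complete and matches the paper's intended argument.
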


\begin{theorem}
Suppose $-1<B<A<1$, $\beta_{0} \approx 0.475319$,  $\beta >0$ and $\gamma >0$. Let $p$ be an analytic function which satisfies the condition $p(0)=1$ and the following inequalities holds:
\[
(A-B)(1+B)[\gamma(1+B^2+2B)-2B(B+1)\beta ]-(1+A)(1+B^2+2B)^2 \geq \beta_{0} (1+B) (1+B^2+2B)^2
\]
for $B>0$, and
\[
(A-B)(1+B)[\gamma(1+B^2-2B)-2B(B-1)\beta ]-(1+A)(1+B^2-2B)^2 \geq \beta_{0} (1+B) (1+B^2-2B)^2
\]
for $B<0$. Then,
\[p(z) +\gamma z p^{\prime}(z)+ \beta z^2 p''(z) \prec \phi_{SG}(z)\]
implies \[p \prec \frac{1+Az}{1+Bz}.\]
\end{theorem}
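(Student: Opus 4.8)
The plan is to reuse the admissibility machinery of the preceding theorem, the only change being that the leading constant $1$ is replaced by $p(z)$. Accordingly I would take $\Omega = \phi_{SG}(\mathbb{D}) = \{w : |\log(w/(2-w))| < 1\}$ and consider $\psi(r,s,t;z) = r + \gamma s + \beta t$. By Theorem \ref{second order admissibility theorem} the desired subordination $p \prec (1+Az)/(1+Bz)$ follows once $\psi \in \Psi[\Omega;A,B]$, i.e.\ once $\psi(r,s,t;z)\notin\Omega$ for $r = q(\zeta)$, $s = m\zeta q'(\zeta)$ and $\operatorname{Re}(t/s+1) \geq m\operatorname{Re}(\zeta q''(\zeta)/q'(\zeta)+1)$, where $q(z) = (1+Az)/(1+Bz)$, $\zeta = e^{i\theta}$ and $m \geq 1$. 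By Lemma \ref{lemma for admissibility} this is equivalent to the single modulus bound $|\psi(r,s,t;z)| \geq \beta_0$.

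First I would peel off the $p$-term with the reverse triangle inequality and then treat the linear part exactly as in the previous theorem:
\[|\psi(r,s,t;z)| \geq |\gamma s + \beta t| - |r| = \gamma|s|\left|1 + \frac{\beta}{\gamma}\frac{t}{s}\right| - |r| \geq \gamma|s|\operatorname{Re}\left(1 + \frac{\beta}{\gamma}\frac{t}{s}\right) - |r|.\]
Since $\beta,\gamma>0$, feeding in the admissibility estimate $\operatorname{Re}(t/s) \geq mg(\theta)+m-1$, together with $|s| = md(\theta)$ and $|r| = k(\theta)$ from \eqref{d(theta)}, \eqref{g(theta)}, \eqref{k(theta)}, yields the lower bound $\phi(m,\theta) := m\gamma d(\theta)(1+(\beta/\gamma)(mg(\theta)+m-1)) - k(\theta)$.

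Next I would carry out the optimization in two stages. For fixed $\theta$ the hypotheses force the bracket $\gamma+\beta(m-1)+\beta m g(\theta)$ to stay nonnegative for $m\geq 1$, so $\phi(\cdot,\theta)$ is increasing and $\phi(m,\theta)\geq\phi(1,\theta)$. For the $\theta$-optimization I would bound the two summands independently: the monotonicity statements recorded after \eqref{d(theta)} and \eqref{g(theta)} give $d(\theta)\geq d(0)$, $g(\theta)\geq g(0)$ when $B>0$ (resp.\ $d(\theta)\geq d(\pi)$, $g(\theta)\geq g(\pi)$ when $B<0$), so the positive product $d(\theta)[\gamma+\beta g(\theta)]$ is at least its value at $\theta=0$ (resp.\ $\theta=\pi$), while $k(\theta)\leq k(0)=(1+A)/(1+B)$ because $k$ is increasing in $\cos\theta$. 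Using $1+B^2+2B=(1+B)^2$ and $1+B^2-2B=(1-B)^2$, the resulting requirement $\phi(1)\geq\beta_0$ rearranges to precisely the two displayed inequalities; hence $|\psi|\geq\beta_0$, $\psi\notin\Omega$, and $p\in\mathcal{P}[A,B]$.

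I expect the $\theta$-optimization to be the main obstacle. Whereas the preceding theorem had the $\theta$-free constant $1$ in place of $p$, here the subtracted quantity $k(\theta)$ is maximized at $\theta=0$, yet for $B<0$ the bracket $d(\theta)[\gamma+\beta g(\theta)]$ is minimized at $\theta=\pi$, so the two extrema sit at different points of the circle. The point that makes the argument go through is that the two pieces are bounded separately, so this mismatch is harmless; the genuinely necessary check is the positivity of the relevant bracket, namely $\gamma+\beta g(0)$ when $B>0$ and $\gamma+\beta g(\pi)$ when $B<0$ (equivalently $\gamma(1+B)>2B\beta$ and $\gamma(1-B)>-2B\beta$ respectively). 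This positivity is exactly what legitimizes replacing the product $d(\theta)[\gamma+\beta g(\theta)]$ by its endpoint value, and it is guaranteed by the stated bounds on the parameters.
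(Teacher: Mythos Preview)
Your proposal is correct and follows essentially the same route as the paper: the same choice of $\psi(r,s,t;z)=r+\gamma s+\beta t$, the same reduction via Lemma~\ref{lemma for admissibility} to $|\psi|\geq\beta_0$, the same triangle-inequality splitting $|\psi|\geq\gamma|s|\operatorname{Re}(1+(\beta/\gamma)t/s)-|r|$, and the same use of the extremal values of $d(\theta)$, $g(\theta)$ and $k(\theta)\leq(1+A)/(1+B)$. Your discussion of the two-stage minimization and the positivity of $\gamma+\beta g(\theta)$ is more explicit than the paper's (which simply asserts that $\phi(m)$ is increasing), but the underlying argument is the same.
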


\begin{proof}
Let $\Omega$ be the domain defined in Theorem \eqref{admissibility first thm}(a).
Consider the function
$\psi : \mathbb{C}^3 \times \mathbb{D} \rightarrow \mathbb{C}$
defined as
\[\psi(r,s,t;z) = r +\gamma s + \beta t.\]
For $\psi$ to be in $\Psi[\Omega;A,B]$, we must have $\psi(r,s,t;z) \notin \Omega$.
By Theorem \eqref{second order admissibility theorem}, this result is true if\[\left|\log \left(\frac{\psi(r,s,t;z)}{2-\psi(r,s,t;z)}\right)\right| \geq 1\]
Using Lemma \eqref{lemma for admissibility}, this inequality holds if and only if
\begin{equation}\label{condition using beta_{0}}
|\psi(r,s,t;z)| \geq \beta_{0}
\end{equation}
where $\beta_{0}$ is the positive real root of the equation in \eqref{lemma inequality}.
If $k(\theta)$, $g(\theta)$ and $d(\theta)$ are given by the equations \eqref{k(theta)}, \eqref{g(theta)} and \eqref{d(theta)} respectively, then
\begin{align*}
|\psi(r,s,t;z)| & =|r + \gamma s + \beta t|\\
& \geq  \gamma |s| \left |1+\left(\frac{\beta}{\gamma}\right) \frac{t}{s} \right|-|r|\\
& \geq  \gamma |s| \operatorname{Re}\left(1+\left(\frac{\beta}{\gamma}\right) \frac{t}{s}\right)-|r|\\
& \geq  m \gamma d(\theta) \left(1 + \left(\frac{\beta}{\gamma}\right)(m g(\theta)+m-1)\right)-|k(\theta)|\\
&\geq  m \gamma d(\theta) \left(1 + \left(\frac{\beta}{\gamma}\right)(m g(\theta)+m-1)\right)-\frac{1+A}{1+B}\\
&:=\phi(m).
\end{align*}
Since $\phi(m)$ is increasing function, we have,
 for $B>0$,
\[|\psi(r,s,t;z)| \geq  \frac{(A-B)}{1+B^2+2B}\left(\gamma  - \frac{2B(B+1)\beta }{1+B^2+2B}\right)-\frac{1+A}{1+B} \text{ and }\]
for $B<0$,
\[|\psi(r,s,t;z)| \geq  \frac{(A-B)}{1+B^2-2B}\left(\gamma -  \frac{2B(B-1)\beta }{1+B^2-2B}\right)-\frac{1+A}{1+B}.\]
Therefore, inequality \eqref{condition using beta_{0}} is satisfied and hence, we get the required result.
\end{proof}

\end{document}